\theoremstyle{plain}
\newtheorem{theorem}{Theorem}[section]
\newtheorem{lemma}[theorem]{Lemma}
\newtheorem{corollary}[theorem]{Corollary}
\theoremstyle{definition}
\newtheorem{definition}[theorem]{Definition}
\newtheorem{notation}[theorem]{Notation}
\theoremstyle{remark}
\newtheorem{question}[theorem]{Question}
\numberwithin{equation}{section}
\begin{document}
\title[Lipschitz subactions for a flow]{Lipschitz sub-actions for locally maximal hyperbolic sets of a  $C^1$ flow
%\\ \medskip \small Sous actions Lipschitz pour des ensembles hyperboliques 
%localement maximaux de flots $C^1$
}

\author{Xifeng Su}
\address{School of Mathematical Sciences,
Beijing Normal University,
No. 19, XinJieKouWai Street, HaiDian District,
 Beijing 100875, P. R. China}
\email{xfsu@bnu.edu.cn,  billy3492@gmail.com}

\author{Philippe Thieullen}
\address{Institut de Math\'ematiques de Bordeaux,
Universit\'e de Bordeaux,
351 cours de la Lib\'eration, F 33405 Talence, France}
\email{philippe.thieullen@u-bordeaux.fr}

\keywords{Anosov  flow, weak KAM solutions, Lax-Oleinik operator, Lipschitz coboundary, positive Liv\v{s}ic criterion}.

\date{\today}

\newcommand{\Lip}{\text{\rm Lip}}
\newcommand{\Grass}{\text{\rm Grass}}
\newcommand{\Graph}{\text{\rm Graph}}
\newcommand{\diam}{\text{\rm diam}}
\newcommand{\dist}{\text{\rm dist}}
\newcommand{\Id}{\text{\rm Id}}
\newcommand{\Card}{\text{\rm Card}}
\newcommand{\Supp}{\text{\rm supp}}
\newcommand{\Osc}{\text{\,\rm Osc}}

\begin{abstract}
Liv\v{s}ic theorem for flows  asserts that a Lipschitz observable that has zero mean average along every periodic orbit is necessarily a coboundary, that is the Lie derivative of a Lipschitz function smooth along the flow direction. The positive Liv\v{s}ic theorem bounds from below the observable by such a coboundary as soon as the mean average along every periodic orbit  is non negative. Previous proofs give a H\"older coboundary. Assuming that the dynamics is given by a locally maximal hyperbolic flow, we show that the  coboundary can be Lipschitz. We introduce a new tool: the  Lax-Oleinik semigroup, inspired by Fathi's weak KAM theory.
%\bigskip\newline
%R\'esum\'e : Le th\'eor\`eme de Liv\v{s}ic pour les flots d'Anosov 
%affirment que toute observable Lipschitz qui poss\`ede une moyenne nulle 
%sur tout orbite p\'eriodique est n\'ecessairement un cobord, 
%c'est-\`a-dire la d\'eriv\'ee de Lie d'une fonction Lipschitz lisse le 
%long du flot. Le th\'eor\`eme de Liv\v{s}ic positif permet de minorer 
%toute obsersable par un cobord d\`es que sa moyenne est positive ou 
%nulle sur toute orbite p\'eriodique. Il est connu que la r\'egularit\'e 
%du cobord dans le cas du th\'eor\`eme de Liv\v{s}ic positif est au moins 
%H\"older. Nous montrons qu'en fait la r\'egularit\'e est Lipschitz. Nous 
%\'etendons ce r\'esultat aux flots pr\'eservant un ensemble hyperbolique 
%localement maximal. La preuve utilise des techniques  KAM faible et un 
%nouvel  op\'erateur de type Lax-Oleinik.
\end{abstract}

\maketitle
\tableofcontents

\section{Introduction and main results}

A $C^r$ flow, $r\geq1$, is a triple $(M,V,f)$ where $M$ is a $C^{r+1}$ manifold without boundary  of dimension $d+1\geq3$, not necessarily compact, equipped with a smooth Riemannian metric,  $V:M \to TM$ is a complete $C^{r}$ vector field that never vanishes, $f=(f^t)_{t\in\mathbb{R}}$ is a flow  with infinitesimal generator $V$, that is  a one-parameter group of $C^{r}$ bijective maps satisfying 
\[
\forall\, s,t \in\mathbb{R}, \ \forall\, x \in M, \  \frac{d}{dt}f^t(x) = V \circ f^t(x), \ f^0(x)= x, \quad f^{s+t} = f^s \circ f^t.
\]

\begin{definition}
\label{Definition:LocallyHyperbolicFlow}
Let $(M,V,f)$ be a $C^r$ flow, $r\geq1$, and $\Lambda \subseteq M$ be a  compact invariant set ($\forall\, t \in\mathbb{R}, \ f^t(\Lambda) = \Lambda)$. 
\begin{enumerate}
\item $\Lambda$ is said to be  {\it hyperbolic}  if there exist constants $C_\Lambda\geq1$, $\lambda^s < 0 < \lambda^u$, and a continuous splitting of $\Lambda$, $T_xM =E_\Lambda^u(x) \oplus E_{\Lambda}^0(x) \oplus E_\Lambda^s(x)$ for every $x \in\Lambda$, $d^u:=\dim(E^u_\Lambda) \geq1$, $d^s := \dim(E^s_\Lambda)$, such that, 
\begin{enumerate}
\item the splitting is equivariant: for every $x \in \Lambda$,
\begin{gather*}
T_xf^t(E^u(x)) = E^u(f^t(x)), \ T_xf^t(E^s(x)) = E^s(f^t(x)), \\
T_xf^t(V(x)) = V \circ f^t(x),
\end{gather*}
\item the splitting is transversally hyperbolic
\begin{gather*}
\renewcommand{\arraystretch}{1.2}
\forall x \in \Lambda, \ \forall t\geq0, \ 
\left\{\begin{array}{l}
\forall v \in E_\Lambda^s(x) ,\ \| T_xf^t(v) \| \leq C_{\Lambda} \, e^{t\lambda^s} \|v\|, \\
E_\Lambda^0(x) = V(x) \mathbb{R}, \quad  \\
\forall v \in E_\Lambda^u(x), \ \| T_xf^{t}(v) \| \geq C_{\Lambda}^{-1} \, e^{t\lambda^u} \|v\|.
\end{array}\right.
\end{gather*}
\end{enumerate}
\item $\Lambda$ is said to be {\it locally maximal}  if there exists an open neighborhood $U \supseteq \Lambda$ of compact closure such that $\Lambda = \bigcap_{t \in\mathbb{R}} f^t(U)$.
\end{enumerate}
\end{definition}

We extend in the following definition the notion of ergodic minimizing  value and the notion of  subaction. See  Garibaldi \cite{Garibaldi2017} or Jenkinson \cite{Jenkinson2019} for a complete review on the subject.  We recall that the Lie derivative of a Lipschitz function $u:M\to\mathbb{R}$ that is differentiable along the flow generated by a vector field $V$ is the function,
\[
\mathcal{L}_V[u](x) = \frac{d}{dt}\Big|_{t=0}u\circ f^t(x).
\]

\begin{definition}\label{Definition:ContinuousErgodicOptimization}
Let $(M,V,f)$ be a $C^1$  flow, $\Lambda \subseteq M$ be a compact invariant set, $U \supseteq \Lambda$ be an open neighborhood of $\Lambda$, and $\phi:U \to \mathbb{R}$ be a $C^0$ bounded function.
\begin{enumerate}
\item \label{Label:ContinuousErgodicOptimization_1} The {\it ergodic minimizing value} of $\phi$ (restricted to $\Lambda$) is the quantity
\[
\bar\phi_\Lambda := \lim_{t \to+\infty} \inf_{x \in\Lambda}\frac{1}{t}  \int_0^t \!\phi \circ  f^s(x) \, ds.
\]
\item \label{Label:ContinuousErgodicOptimization_2} A  continuous  function $u:U \to \mathbb{R}$ is said to be an {\it integrated  subaction of $\phi$ on $(U,\Lambda)$} if for every $x \in U$ and $t>0$ such that $\forall\, s \in [0,t], \ f^s(x) \in U$,
\begin{equation*} \label{Equation:LipschitzIntegratedSubaction}
u \circ f^t(x) -u(x) \leq \int_0^t (\phi \circ f^s(x) -\bar \phi_\Lambda) \, ds.
\end{equation*}
\item \label{Label:ContinuousErgodicOptimization_3} A  continuous  function $u:U \to \mathbb{R}$ is said to be a  {\it   subaction of $\phi$ on $(U,\Lambda)$} if $u$ is differentiable along the flow, $\mathcal{L}_V[u]$ is $C^0$, and
\[
\forall\, x \in U, \ \phi(x) -\bar \phi_\Lambda \geq  \mathcal{L}_V[u](x).
\]
If $u$ and $\mathcal{L}_V[u]$are both Lipschitz continuous, we say that $u$ is a {\it Lipschitz continuous subaction}.
\end{enumerate}
\end{definition}

Notice that, if we choose a fixed time of iteration $\tau>0$, if we define a discrete map by $F:=f^\tau$ and an integrated observable by $\Phi(x) := \int_0^{\tau} \phi \circ f^t(x) \, dt$, if  
\[
\bar\Phi_\Lambda := \tau \bar\phi_\Lambda = \lim_{n\to+\infty} \frac{1}{n} \inf_{x \in\Lambda} \sum_{k=0}^{n-1} \Phi \circ F^k(x),
\] 
an integrated subaction $u$ for $\phi$ with respect to the flow is also a subaction for $\Phi$ (item ii of Definition 1.2 of \cite{SuThieullenYu2021}) with respect to the map $F$
\begin{gather*}
\forall\, x \in\Lambda, \ \Phi(x) -\bar\Phi_\Lambda \geq u \circ F(x) -u(x).
\end{gather*}

Our main result is the following.

\begin{theorem} \label{Theorem:ContinuousSubactionExistence}
Let $(M,V,f)$ be a $C^1$ flow, $\Lambda \subseteq M$ be a locally maximal hyperbolic compact connected invariant set. Then there exist an open neighborhood $\Omega$ of $\Lambda$, a constant $K_\Lambda>0$, such that, for every  Lipschitz continuous function $\phi:M \to \mathbb{R}$, there exists  $u : \Omega \to \mathbb{R}$ on $(\Omega,\Lambda)$  satisfying
\begin{enumerate}
\item \label{Item:ContinuousSubactionExistence_1} $u$ is Lipschitz continuous, $\Lip(u) \leq K_\Lambda \Lip(\phi)$,
\item \label{Item:ContinuousSubactionExistence_2} $u$ is differentiable along the flow,
\item \label{Item:ContinuousSubactionExistence_3} $\mathcal{L}_V[u]$ is Lipschitz continuous, $\Lip(\mathcal{L}_V[u]) \leq K_\Lambda \Lip(\phi)$,
\item \label{Item:ContinuousSubactionExistence_4} $\forall\, x \in \Omega, \ \phi(x) - \bar \phi_\Lambda \geq \mathcal{L}_V[u](x)$.
\end{enumerate}
\end{theorem}

\begin{proof}
The proof readily follows from  the Theorems \ref{Theorem:ContinuousPositiveCriteria} and \ref{Theorem:ContinuousWeakKAMsolution}, and a smoothing technique using a regularizing operator. The proof is done at the end of section~\ref{Section:LaxOleinikSemigroup}.
\end{proof}

The existence of a Lipschitz subaction is proved under the hypothesis that the observable $\phi$ is Lipschitz and the flow $f$ is $C^1$. We could expect a more regular subaction under the hypothesis of a $C^2$ flow and a $C^2$ observable. Actually it is false in general as it was observed by Bousch-Jenkinson \cite{BouschJenkinson2002} for an example of a trigonometric polynomial under the doubling map on the circle. The Lipschitz regularity is in some sense optimal for a general statement

Notice that we recover the positive Liv\v{s}ic theorem mentioned in the abstract. For  hyperbolic compact sets, the set $\mathcal{P}(\Lambda,f)$ of periodic measures $\frac{1}{T} \int_0^T \delta_{f^s(x)}\,ds$, where $x \in\Lambda$ and $f^T(x)=x$ is a periodic point of least period $T$, is dense in the space of invariant probability measures $\mathcal{M}_1(\Lambda,f)$  supported in $\Lambda$. The ergodic minimizing value admits then the equivalent definition
\begin{gather*}
\bar \phi_\Lambda = \inf_{\mu \in \mathcal{M}_1(\Lambda,f)} \int \phi \, d\mu = \inf_{\mu \mathcal{P}(\Lambda,f)} \int \phi \, d\mu.
\end{gather*}

The ergodic maximizing value of $\phi$ is defined similarly by
\begin{gather*}
\bar{\bar\phi}_\Lambda := \lim_{t \to+\infty} \sup_{x \in\Lambda}\frac{1}{t}  \int_0^t \!\phi \circ  f^s(x) \, ds \geq \bar\phi_\Lambda.
\end{gather*}
A bound from above $\phi - \mathcal{L}_V[u] \leq  \bar{\bar\phi}_\Lambda$ is obtained similarly. Actually it is easy to obtain both bounds simultaneously.

\begin{corollary} \label{Corollary:SimultaneousBounds}
Let $(M,V,f,\Lambda,\phi)$  as in Theorem \ref{Theorem:ContinuousSubactionExistence}. Assume $\bar\phi_\Lambda < \bar{\bar\phi}_\Lambda$.
Then there exists a Lipschitz continuous subaction $u : \Omega \to \mathbb{R}$ (satisfying \ref{Item:ContinuousSubactionExistence_1}--\ref{Item:ContinuousSubactionExistence_3} of Theorem \ref{Theorem:ContinuousSubactionExistence})  defined in a neighborhood of $\Lambda$ such that
\begin{gather*}
\bar\phi_\Lambda \leq \phi - \mathcal{L}_V[u] \leq  \bar{\bar\phi}_\Lambda.
\end{gather*}
\end{corollary}

In the case $M$ is compact and $f$ is a transitive Anosov flow, if $\bar\phi_M = \bar{\bar\phi}_M$, the classical Liv\v{s}ic Theorem asserts that $\phi$ is a regular coboundary in the sense $\phi - \mathcal{L}[u] = \bar\phi_\Lambda$ where $u:M\to\mathbb{R}$ possesses the same regularity as $f$ (see \cite{LlaveMarcoMoriyon1986}). Corollary \ref{Corollary:SimultaneousBounds} is thus also valid in the case $\bar\phi_M = \bar{\bar\phi}_M$. Nevertheless we do not assume in the present paper that $f$ is transitive on $\Lambda$ and we do not know whether  Corollary \ref{Corollary:SimultaneousBounds} is still true in the case $\bar\phi_M = \bar{\bar\phi}_M$.

The plan of the article is as follows. In section \ref{Section:PositiveLivsicCriteria},  we show that for locally maximal hyperbolic compact invariant set $\Lambda$, every Lipschitz continuous function $\phi$ satisfies a criterion  called   ``positive Liv\v{s}ic criterion''.  In section \ref{Section:LaxOleinikSemigroup}, we prove Theorem \ref{Theorem:ContinuousSubactionExistence} without assuming  that $\Lambda$ is hyperbolic: we show that any Lipschitz continuous function satisfying the positive Liv\v{s}ic criterion admits a Lipschitz continuous subaction. We introduce a nonlinear semigroup analogous to the  ``Lax-Oleinik semigroup'' in \cite{Fathi2016}. A fixed point of the Lax-Oleinik semigroup provides a Lipschitz continuous integrated subaction. We conclude the proof by using a regularizing operator.  

The interest of splitting the proof in two distinct parts leads to the existence of a Lipschitz continuous subaction even in the case the flow is not uniformly hyperbolic, nor does it satisfy the shadowing lemma. The Lax-Oleinik semigroup gives an explicit construction of the subaction in addition to more regularity. We obtain a Lipschitz regularity contrary to the previous proofs that give only H\"older regularity (see  Lopes, Roasa, Ruggiero \cite{LopesRoasaRuggiero2007}, Lopes, Thieullen \cite{LopesThieullen2005}, or Pollicott, Sharp \cite{PollicottSharp2004}). In both papers, the stable manifolds are used to construct the subaction and it is known that the stable manifolds is  in general H\"older and not Lipschitz. Huang-Lian-Ma-Xu-Zhang \cite{HuangLianMaXuZhang2019_2}   obtained a Lipschitz continuous integrated  subaction $u_T : \Omega \to \mathbb{R}$, for $T$ sufficiently large in the sense
\begin{equation*}
\forall\, x \in \Omega,  \ \int_0^T \phi \circ f^s(x) \, ds \geq u_T \circ f^T(x) -u_T(x) + T\bar \phi_\Lambda.
\end{equation*}
There is no reason in their paper that $u_T$ may be chosen independently of $T$. We show that $u$ may be chosen independently of $T$ and that the above inequality may be differentiated in $T$. 

\begin{question}
\begin{enumerate}
\item The Mather beta  function is  obtained by minimizing an action of an arbitrarily long path with a fixed homology. The Mather alpha function is also obtained by minimizing the action of an  observable  modified by a cohomology (to be clarified in our setting). As our main tool is a notion of penalized action  of an observable (see \eqref{Equation:WeightedAction}), it would be interesting to extend both Mather beta and alpha functions to that framework and see how they are related to the stable norm, (see \cite{PollicottSharp2004}, \cite{Massart2011}).
\item An approximate positive Liv\v{s}ic of error $\epsilon$ could be expected as in \cite{Katok1990} and \cite{GouezelLefeuvre2021}. The ergodic minimizing value $\bar \phi_\Lambda$ is replaced by the infimum of $\frac{1}{T}\int_0^T \phi \circ f^s(x) \, ds$ over all periodic point $x$ of period least $T \leq \epsilon^{-1}$. In these papers an approximate Liv\v{s}ic theorem is proved and the coboundary is H\"older continuous and depends on $\epsilon$. It would be interesting to extend their result for positive Liv\v{s}ic theorem with a better regularity.
\end{enumerate}
\end{question}

%%%%%%%%%%%%%%%%%%%%%%%
\section{The positive Liv\v{s}ic criterion}
\label{Section:PositiveLivsicCriteria}
%%%%%%%%%%%%%%%%%%%%%%%

The {\it  positive Liv\v{s}ic criterion} below is a new concept similar to the notion of the (discrete) positive Liv\v{s}ic criterion introduced in Definition 3.2 of  \cite{SuThieullenYu2021} or the  notion of rectifiable orbit introduced in  Lemma 1.1 in \cite{Bousch2011}. The fact that a Lipschitz  function on a locally maximal compact invariant hyperbolic set satisfies the positive Liv\v{s}ic criterion is the key ingredient of the proof of the existence of a Lipschitz subaction (instead of a H\"older subaction as in \cite{LopesThieullen2005}). The proof of this criterion is  strongly related to a version of the shadowing lemma in the continuous setting,  to the fact that a pseudo orbit (defined in the continuous setting in Definition~\ref{Definition:PeriodicPseudoOrbitFlow}) is shadowed by a true orbit.
 
The {\it penalized action of a  piecewise $C^1$ continuous  path $z:[0,T] \to M$},  with  penalized constant $C\geq0$,  is the real number given by
\begin{gather}
\mathcal{A}_{\phi,C}(z)  := \int_0^T \big[(\phi  - \bar\phi_\Lambda )\circ z(s)+ C \|V \circ z(s) -z^{\prime} (s)\| \big] \, ds. \label{Equation:WeightedAction}
\end{gather}

Notice that the second term in the penalized action, $L(x,v) := \|V(x)- v\|$, is similar to the standard Lagrangian term, $L(x,v) = \frac{1}{2} \| V(x) - v\|^2$, used  to embed the flow of a vector field $V$ (see (1.20) in  \cite{ContrerasIturriaga1999}) in a Lagrangian problem.

\begin{definition}[Positive Liv\v{s}ic criterion]
\label{Definition:PositiveLivcicCriteria}
Let $(M,V,f)$ be a $C^1$ flow, $\Lambda$ be a compact connected invariant set,  $\Omega \supset \Lambda$ be an open neighborhood of $\Lambda$ with compact closure, and $C\geq0$ be a non negative constant.  Let  $\phi: \Omega \to \mathbb{R}$ be a $C^0$ bounded function  and $\bar\phi_\Lambda$ be the ergodic minimizing value of $\phi$ restricted to $\Lambda$. We say that $\phi$ satisfies the {\it positive Liv\v{s}ic criterion on $(\Omega,\Lambda)$ with penalized  constant $C$} if 
\[
\inf_{T>0} \, \inf_{z:[0,T] \to \Omega}\mathcal{A}_{\phi,C}(z) > -\infty,
\]
where the infimum is realized over $T>0$ and the set of  piecewise $C^1$ continuous paths $z:[0,T] \to \Omega$.
\end{definition}

The following lemma justifies the introduction of the positive Liv\v{s}ic criterion in the case  there exists a $C^1$ subaction.

\begin{lemma} \label{Lemma:Justification}
Let $(M,V,f)$ be a $C^1$  flow and $\phi:M \to \mathbb{R}$ be a $C^0$ bounded function. Assume there exists a $C^1$ subaction  $u:M \to \mathbb{R}$,  $\phi - \bar\phi_\Lambda \geq \mathcal{L}_{V} [u]$. Then $\phi$ satisfies the positive Liv\v{s}ic criterion with  $C := \|du\|_\infty$: for every $T>0$, for every  piecewise $C^1$  continuous   path $z:[0,T] \to M$
\[
\mathcal{A}_{\phi,C}(z) \geq -2 \Osc(u),
\]
where $\Osc(u) = \sup_{x\in M} u(x) - \inf_{x\in M} u(x)$.
\end{lemma}

\begin{proof}
For any $C^1$ function $u$, the Lie derivative admits the equivalent form
\[
\mathcal{L}_{V} [u](x) = du(x) \cdot V(x).
\]
Then 
\begin{align*}
\int_0^T \! \Big(\phi &\circ z(s) -\bar\phi_\Lambda +\| du\|_{\infty} \|V \circ z(s) -z^{\prime} (s)\| \Big)ds \\
\geq & \int_0^T \! \Big( du \circ z \cdot V \circ z +\| d u\|_{\infty} \|V \circ z -z^{\prime} \| \Big) \, ds \\
= &  \int_0^T \! \Big( du \circ z \cdot(V \circ z -z ^{\prime}) +\| d u\|_{\infty} \|V \circ z -z^{\prime} \| \Big)ds +  \int_0^T\!  du \circ z \cdot z^{\prime}  \, ds\\
\geq & \int_0^T du  \circ z \cdot z^{\prime} \, ds =  u \circ z(T) -u \circ z(0) \geq -2\|u\|_{\infty}.\qedhere
\end{align*}
\end{proof}

The following theorem shows that the conclusions of Lemma \ref{Lemma:Justification} are still valid without the assumption of the existence of a $C^1$ subaction.

\begin{theorem} \label{Theorem:ContinuousPositiveCriteria}
Let $(M,V,f)$ be a $C^1$ flow, $\Lambda$ be a locally maximal hyperbolic compact connected invariant set as in Definition \ref{Definition:LocallyHyperbolicFlow}. Then there exist an open neighborhood $\Omega$ of $\Lambda$ of compact closure and constants $C_\Lambda\geq0$, $\delta_\Lambda \geq0$, such that for every Lipschitz continuous $\phi : \Omega \to \mathbb{R}$, for every piecewise $C^1$ continuous path $z:[0,T] \to \Omega$
\[
 \int_0^T \big[(\phi  - \bar\phi_\Lambda )\circ z(s)+ C_\Lambda\Lip(\phi) \|V \circ z(s) -z^{\prime} (s)\| \big] \, ds \geq -\delta_\Lambda \Lip(\phi).
\]
\end{theorem}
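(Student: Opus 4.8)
The plan is to reduce the continuous statement to the discrete positive Liv\v{s}ic criterion of \cite{SuThieullenYu2021} by discretizing an arbitrary path $z:[0,T]\to\Omega$ and invoking a continuous shadowing lemma. First I would fix a time step $\tau>0$, set $F:=f^\tau$ and $\Phi(x):=\int_0^\tau\phi\circ f^s(x)\,ds$, and observe that $\mathrm{Lip}(\Phi)\lesssim_\Lambda\mathrm{Lip}(\phi)$ and $\bar\Phi_\Lambda=\tau\bar\phi_\Lambda$. Given a path $z:[0,T]\to\Omega$, I would sample it at the times $k\tau$ to produce a finite sequence of points, and estimate how far $(z(k\tau))_k$ is from being a genuine $F$-orbit: the defect $\mathrm{dist}(F(z(k\tau)),z((k+1)\tau))$ is controlled by $\int_{k\tau}^{(k+1)\tau}\|V\circ z(s)-z'(s)\|\,ds$ up to a multiplicative constant depending on the local expansion of the flow on $\Omega$ (Gr\"onwall). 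This is exactly the notion of a pseudo-orbit in the continuous setting referenced in Definition~\ref{Definition:PeriodicPseudoOrbitFlow}, and the total shadowing defect is therefore bounded by $\int_0^T\|V\circ z(s)-z'(s)\|\,ds$.

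Next I would apply the (continuous) shadowing lemma: since $\Lambda$ is locally maximal and hyperbolic, there is a neighborhood $\Omega$ such that every sufficiently fine pseudo-orbit staying in $\Omega$ is shadowed by a true orbit in $\Lambda$, with a shadowing distance linearly controlled by the pseudo-orbit defect. Here the orbit we shadow may not have period $T$; to close it up and feed it into the \emph{periodic} discrete criterion, I would either (a) use the connectedness of $\Lambda$ to join the endpoints by a bounded bridging path of bounded length, paying only an additive constant $\delta_\Lambda\mathrm{Lip}(\phi)$, or (b) apply the non-periodic form of the positive Liv\v{s}ic criterion directly. Along the true shadowing orbit $(y_k)$ in $\Lambda$, the discrete positive Liv\v{s}ic criterion of \cite{SuThieullenYu2021}, valid because $\phi$ (hence $\Phi$) is Lipschitz on a locally maximal hyperbolic set, gives $\sum_k\big(\Phi(y_k)-\bar\Phi_\Lambda\big)\geq -\mathrm{const}\cdot\mathrm{Lip}(\Phi)$.

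Then I would transfer this back to the original path. Writing $\int_0^T(\phi-\bar\phi_\Lambda)\circ z(s)\,ds=\sum_k\big(\int_{k\tau}^{(k+1)\tau}\phi\circ z(s)\,ds-\tau\bar\phi_\Lambda\big)$, I compare each term with $\Phi(y_k)-\bar\Phi_\Lambda$: the difference is bounded by $\mathrm{Lip}(\phi)$ times the sum of (i) the shadowing distance $\mathrm{dist}(z(k\tau),y_k)$ and (ii) the oscillation of $z$ on $[k\tau,(k+1)\tau]$, which is itself $\le\tau\|V\|_\infty+\int_{k\tau}^{(k+1)\tau}\|V\circ z-z'\|\,ds$. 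Summing, the first family of errors is bounded by $\mathrm{Lip}(\phi)\int_0^T\|V\circ z-z'\|\,ds$ — and this is precisely why the weighted action carries the term $C_\Lambda\mathrm{Lip}(\phi)\|V\circ z-z'\|$, so it gets absorbed on the left for a large enough $C_\Lambda$. The constant-length errors ($\tau\|V\|_\infty$ per step) are the delicate point: naively they accumulate like $T$, so I would instead use that $\int_{k\tau}^{(k+1)\tau}\phi\circ z\,ds$ differs from $\tau\,\phi(z(k\tau))$ by only $\mathrm{Lip}(\phi)$ times the oscillation, and then $\phi(z(k\tau))$ differs from $\tau^{-1}\Phi(y_k)$-type averages by the shadowing distance, so the genuinely $T$-proportional contributions cancel against $\bar\phi_\Lambda$ and what survives is again controlled by the defect integral plus a single additive constant from the endpoint bridging.

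The main obstacle I anticipate is exactly this bookkeeping of which error terms are proportional to $T$ (and must cancel or be absorbed into the $C_\Lambda\|V\circ z-z'\|$ weight) versus which are genuinely additive (and contribute to $\delta_\Lambda$); in particular one must choose $\tau$ depending only on $\Lambda$ so that the hyperbolicity constants and the shadowing lemma apply uniformly, and handle paths $z$ that wander near $\partial\Omega$ by shrinking $\Omega$ so that the pseudo-orbit still lies in the shadowing neighborhood. A secondary technical point is proving the continuous shadowing lemma with the \emph{linear} dependence of shadowing distance on defect — but since $\Lambda$ is locally maximal and hyperbolic this is the standard Anosov shadowing argument adapted to flows, and I would either cite it or sketch the cone-field/graph-transform proof in the preceding section alongside Definition~\ref{Definition:PeriodicPseudoOrbitFlow}.
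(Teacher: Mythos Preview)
Your route differs from the paper's: you sample at fixed times $k\tau$ and reduce to the discrete criterion of \cite{SuThieullenYu2021} for $F=f^\tau$, whereas the paper works with Poincar\'e return maps and trap boxes, re-deriving the bound via a trichotomy (pseudo-orbit / escaped / trapped, Lemma~\ref{Lemma:PositiveLivsicChecking}) and a pigeonhole decomposition into periodic pieces (Lemma~\ref{Lemma:PseudoPeriodicPath}). Your identification of $(z(k\tau))_k$ with Definition~\ref{Definition:PeriodicPseudoOrbitFlow} is off: that definition requires the path to exit each trap box through its \emph{forward} boundary, which fixed-time sampling does not enforce.

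Your approach is viable with two repairs. First, the error bookkeeping: do not compare $\int_{k\tau}^{(k+1)\tau}\phi\circ z$ with $\tau\,\phi(z(k\tau))$ via the oscillation bound $\tau\|V\|_\infty+\int\|V\circ z-z'\|$ --- that is exactly where your spurious $T$-proportional leftover comes from. Compare directly with $\Phi(z(k\tau))=\int_0^\tau\phi(f^s(z(k\tau)))\,ds$: Gr\"onwall gives $d\big(z(k\tau+s),f^s(z(k\tau))\big)\le e^{\Lip(V)s}\int_{k\tau}^{k\tau+s}\|V\circ z-z'\|$, so both this discrepancy and $d\big(F(z(k\tau)),z((k+1)\tau)\big)$ are bounded by $\text{const}(\tau)\int_{k\tau}^{(k+1)\tau}\|V\circ z-z'\|$ with \emph{no} additive constant per step. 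Summing, the continuous weighted action dominates the discrete one with nothing $T$-proportional left over; the hand-waving about cancellation against $\bar\phi_\Lambda$ is then unnecessary.

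Second, and more seriously, your option (a) --- shadow, close up, feed into the periodic criterion --- breaks down whenever some per-step defect $\int_{k\tau}^{(k+1)\tau}\|V\circ z-z'\|$ is too large for the shadowing lemma to apply, and you give no mechanism for that case. The paper's mechanism is precisely the ``escaped orbit'' case (item~\ref{Item:PositiveLivsicChecking_2} of Lemma~\ref{Lemma:PositiveLivsicChecking}): a backward exit from a trap box forces the defect on that segment to be at least $\epsilon$, hence the weighted action there is at least $\mathcal{A}_*$, chosen large enough (Lemma~\ref{lemma:FinitenessPseudoBirkhoffIntegral}) to compensate the bounded negative contribution of the preceding pseudo-orbit block. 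If you do not want to reproduce this argument, you must take your option (b) and invoke the \emph{full} discrete positive Liv\v{s}ic criterion of \cite{SuThieullenYu2021}, which already allows arbitrary jumps; then the shadowing discussion is superfluous, since it is subsumed in the proof of the discrete criterion. That route is shorter than the paper's, at the price of importing the discrete result as a black box rather than giving a self-contained flow argument.
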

The constant $C_\Lambda$ depends only on the hyperbolicity of $\Lambda$. The heart of the proof of Theorem \ref{Theorem:ContinuousPositiveCriteria} consists in writing the action $\mathcal{A}_{\phi,C}(z)$ as a Birkhoff sum of penalized actions under the dynamics of a sequence of Poincar\'e return maps and apply a version of the Anosov shadowing Lemma in local charts  (see   \cite[Theorem 2.1]{SuThieullenYu2021}).

We choose once for all  a family of adapted local flow boxes   
\begin{gather*}
\Gamma=(\Gamma,\Sigma, E, N,F,A)
\end{gather*}
as explained in Definition~\ref{Definition:AdaptedLocalFlowBoxes}. We recall that $\Gamma := (\gamma_x)_{x\in\Lambda}$  is a family of local diffeomorphisms  $\gamma_x : (-\tau,2\tau) \times B_x(\rho) \to M$  on a transverse ball $B_x(\rho) \subset \mathbb{R}^d$ of radius $\rho$ with respect to the norm $\| \cdot \|_x$; $\Sigma := (\Sigma_x)_{x \in\Lambda}$ is a family of local Poincar\'e sections passing through $\gamma_x(0,0)$ with return time $\tilde \tau_{x,y} : B_x(\rho) \to (0,2\tau)$ between $\Sigma_x$ and $\Sigma_y$ when $x$ and $y$ are forward admissible; $E := (E_x^{u/s})_{x \in\Lambda}$ is a family of splittings $\mathbb{R}^d = E^u_x \oplus E^s_x$ into unstable and stable directions equivariant by the local return maps 
\begin{gather*}
f_{x,y}(v) = \gamma_{y,0}^{-1} \circ f^{{\tilde\tau_{x,y}}(v)} \circ \gamma_{x,0}(v), \ \ \forall\, v \in B_x(\rho) \to B(1)
\end{gather*} 
and uniformly hyperbolic with respect to a family $N :=(\| \cdot \|_x)_{x \in\Lambda}$  of $C^0$ norms on $\mathbb{R}^d$ adapted to the splitting. We will need the notion of a  trap mechanism  explained in figure \ref{figure:LocalFlowBox} and Definition \ref{Definition:NotationsReducedPositiveLivsic}.

\begin{figure}[hbt]
\centering
\includegraphics[width=1\textwidth]{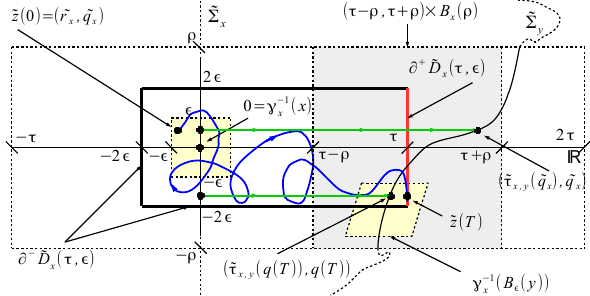}
\caption{The trap mechanism of size $(\tau,\epsilon)$. 
The drawing corresponds to the pseudo orbit case where the path in blue $s\mapsto \tilde z(s)$ exits at the forward boundary $\partial^+  \tilde D_x(\tau,\epsilon)$ in red. The two yellow regions correspond to the $\epsilon$-neighborhoods $U_x(\epsilon)$ and $ U_y(\epsilon)$ in $M$ containing respectively  $z(0)$ and $z(T)$. $\tilde\Sigma_x = \{0\} \times B(1)$ and $\tilde\Sigma_y = \gamma_x^{-1}(\Sigma_y)$ are the local Poincar\'e sections observed in the flow box $(-\tau,2\tau) \times B_x(\rho) $.}
\label{figure:LocalFlowBox}
\end{figure}

\begin{notation}[The trap mechanism] \label{Definition:NotationsReducedPositiveLivsic}
Let  $\epsilon < \frac{1}{2} \tau$.  We introduce the notion of  a {\it local trap box}  $D_x(\tau,\epsilon)$ of size $(\tau,\epsilon)$ located at some $x \in\Lambda$ is the open set
\begin{gather*}
D_x(\tau,\epsilon) = \gamma_x(\tilde D_x(\tau, \epsilon)) \ \text{where} \ \ \tilde D_x( \tau,\epsilon) :=(-2\epsilon,\tau) \times  B_x(2\epsilon)
\end{gather*}
equipped with the norm  $\|(r,u)\|_x = \max(|r|,\|u\|_x)$.  We introduce a {\it local ball} $U_x(\epsilon)$  of size $\epsilon$ through the chart $\gamma_x$ 
\begin{gather*}
U_x(\epsilon) := \gamma_x((-\epsilon,\epsilon) \times B_x(\epsilon)).
\end{gather*}
We choose $\epsilon$ small enough so that, if   $\gamma_x((\tau-\epsilon,\tau+\epsilon) \times B_x(2\epsilon)) $ intersects $U_y(\epsilon)$, then $(x,y)$ is $\Gamma$ forward admissible as in item \ref{Item:AdaptedLocalFlowBoxes_7bis} in  Notation \ref{Definition:AdaptedLocalFlowBoxes}, that is
\begin{gather*}
U_y(\epsilon) \subset \gamma_x((\tau-\rho,\tau+\rho) \times B_x(\rho)) \ \ \text{and} \ \ 
f_{x,y}(0) \in B_y(\epsilon(\rho)).
\end{gather*}
See item \ref{Item:AdaptedLocalFlowBoxes_1} of \ref{Definition:AdaptedLocalFlowBoxes} for the definition of $\epsilon(\rho)$. In particular, the local return time from $\Sigma_x \cap U_x(\epsilon)$ to $\Sigma_y$ belongs to the interval $(\tau-3\epsilon,\tau+3\epsilon)$. Let $\partial^+ \tilde D_x(\tau,\epsilon) \sqcup \partial^- \tilde D_x(\tau,\epsilon)$ be the {\it forward and backward  boundary}  of $\tilde D(\tau,\epsilon)$,
\begin{gather*}
\partial^+ \tilde D_x(\tau,\epsilon) := \{\tau\} \times B_x(2\epsilon), \\
\partial^- \tilde D_x(\tau,\epsilon) := \{-2\epsilon\} \times \overline{B_x(2\epsilon)} \cup [-2\epsilon, \tau] \times \partial B_x(2\epsilon), \\
\partial^\pm D_x(\tau,\epsilon) := \gamma_x(\partial^\pm \tilde D_x(\tau,\epsilon)).
\end{gather*}
We choose   a finite subset $\Lambda_*  \subset \Lambda$ such that $\Lambda \subset \bigcup_{x \in \Lambda_*}U_x(\epsilon)$ and $\epsilon_{AS} < \epsilon$  small enough so that 
\[
\Omega_{AS}:=\{x \in M : d(x,\Lambda) < \epsilon_{AS}\} \subset \bigcup_{x \in \Lambda_*}U_x(\epsilon) \subset U,
\]
where $U$  is the open set that defines the locally maximal set $\Lambda$ as in Definition \ref{Definition:LocallyHyperbolicFlow}. Let 
\[
\Lip(\Gamma) := \sup_{x \in \Lambda} \big(\Lip_x(\gamma_x),\Lip_x(\gamma_x^{-1}) \big).
\]
\end{notation}

We now consider a  piecewise $C^1$ continuous  path  $z: [0,T] \to M$ whose image lies in some $\Omega_{AS} \cap D_x(\tau,\epsilon)$, $x\in \Lambda_*$. We assume that the path $z$ starts close to $x$, 
\[
z(0) \in U_x(\epsilon).
\] 
We denote by $\tilde z := \gamma_x^{-1} \circ z$ the pull backward of $z$ by $\gamma_x$ and 
 discuss three cases. 
 
\begin{enumerate}
\item {\it The pseudo orbit case}: the path $z$ exits at the forward boundary  
\[
z(T) \in \partial^+ D_x(\tau,\epsilon).
\] 
\item {\it The escaping case}: the path exits at  the backward boundary 
\[
z(T) \in \partial ^- D_x(\tau,\epsilon).
\] 
\item {\it The trap case}: the path stays inside $ D_x(\tau,\epsilon)$. 
\end{enumerate}
In the first case, we  assume that there exists $y\in\Lambda_*$ such that
\[
z(T) \in U_y(\epsilon).
\] 
In that case $(x,y)$ is  $\Gamma$ forward admissible, the local return time is well defined 
\[
{\tilde\tau_{x,y}} : B_x(\rho) \to (0, 2\tau), \quad f^{\tilde \tau_{x,y}(q)} \circ \gamma_x(0,q) \in \Sigma_y.
\] 
The box coordinates (defined in different charts) of the two endpoints $z(0)$ and $z(T)$ are well defined
\begin{gather*}
z(0) = \gamma_x(\tilde r_x,\tilde q_x), \quad z(T) = \gamma_y(\tilde r_y,\tilde q_y).
\end{gather*}
The local Poincar\'e return map is also well defined
\[
f_{x,y}(q) = \gamma_{y,0}^{-1} \circ f^{{\tilde\tau_{x,y}}(q)} \circ \gamma_{x,0}(q) :   B_x(\rho) \to B(1),
\]
where $\gamma_{x,0}$ is the restriction of $\gamma_x$ to $\{0\} \times B_x(\rho)$.

 In the second and third cases we don't need  to assume the existence of a point $y\in\Lambda_*$ and to define a local return time.  See Figure \ref{figure:LocalFlowBox} for a schematic description of the trap mechanism.

The following lemma is crucial and gives a lower bound of $\mathcal{A}_{\phi,C}(z)$ in the three cases. The constant $\mathcal{A}$ is set arbitrarily in  the second item of the lemma but will be defined later in Lemma \ref{lemma:FinitenessPseudoBirkhoffIntegral}, $\mathcal{A} = \mathcal{A}_3^*$. A large value of $\mathcal{A}$ forces $C_2(\mathcal{A})$ to be large. 

\begin{lemma} \label{Lemma:PositiveLivsicChecking}
Let $x\in\Lambda$, $D_x(\tau,\epsilon)$, $U_x(\epsilon)$ defined in Notation~\ref{Definition:NotationsReducedPositiveLivsic}. Let $T>0$, and $z:[0,T] \to \Omega_{AS}\cap D_x(\tau,\epsilon)$ be a  piecewise $C^1$ continuous path such that $z(0) \in U_x(\epsilon)$. Let 
\begin{gather*}
C_1 := 6 \Lip(\phi)\Lip(\Gamma)(\tau \Lip(\Gamma)+\diam(\Omega_{AS})), \\
\mathcal{A}_1^* := 8\tau \Lip(\phi)(\tau\Lip(\Gamma)+\diam(\Omega_{AS})).
\end{gather*}

We discuss 3 cases:

\begin{enumerate}
\item \label{Item:PositiveLivsicChecking_1}The pseudo orbit case: $  z(T) \in \partial^+  D_x(\tau,\epsilon)$. The penalized action $\mathcal{A}_{\phi,C}(z)$ is bounded from below by 2  terms: the first term is a coboundary $\Psi_y - \Psi_x$, the second term is a penalized discrete action $\mathbb{A}_{\tilde C}(x,y)$ between the two Poincar\'e sections,. More precisely, let  $(\tilde r_x,\tilde q_x) = \gamma_x^{-1}(z(0))$, $(\tilde r_y,\tilde q_y) = \gamma_y^{-1}(z(T))$.  Then for every $C \geq C_1$,
\begin{gather*}
\mathcal{A}_{\phi,C}(z) \geq \Psi_y-\Psi_x+  \mathbb{A}_{\tilde C}(x,y)
\intertext{where $\tilde C :=  {C}/(\sqrt{8} \, \Lip(\Gamma)^3)$ and}
\mathbb{A}_{\tilde C}(x,y) := \Phi_{x,y}+  \tilde C \| f_{x,y}(\tilde q_x) - \tilde q_y\|_y, \\
\Phi_{x,y} := \int_{0}^{{\tilde\tau_{x,y}}(\tilde q_x)} (\phi-\bar\phi_\Lambda)   \circ \gamma_x(s,\tilde q_x) \, ds,\\
\Psi_x := \int_{0}^{\tilde r_x} (\phi-\bar\phi_\Lambda)   \circ \gamma_x(s,\tilde q_x) \, ds, \
\Psi_y := \int_{0}^{\tilde r_y} (\phi-\bar\phi_\Lambda) \circ \gamma_y(s,\tilde q_y) \, ds.
\end{gather*}

\item \label{Item:PositiveLivsicChecking_2} The escaping case: $z(T) \in \partial^-  D_x(\tau,\epsilon)$. The penalized action is bounded from below by a uniform positive value.   Let $\mathcal{A} >0 $ and 
\begin{gather*}
C_2(\mathcal{A}) := \max \Big( \frac{16}{3} \frac{\tau}{\epsilon}C_1, \frac{4}{\epsilon}\Lip(\Gamma) \mathcal{A} \Big).
\end{gather*} 
Then for every $C \geq C_2(\mathcal{A})$,
\begin{gather*}
\mathcal{A}_{\phi,C}(z) \geq \mathcal{A}.
\end{gather*}
\item \label{Item:PositiveLivsicChecking_3}  The trap case: $\forall\, t\in [0,T], \ z(t) \in  D_x(\tau,\epsilon)$. The penalized action may be non positive but is uniformly bounded from below. More precisely
\[
\mathcal{A}_{\phi,C}(z)  \geq - \mathcal{A}_1^*.
\]
\end{enumerate}
\end{lemma}

\begin{proof}
Let $\tilde z(s) =\gamma_x^{-1} \circ z(s)=(r(s),q(s))$ be the local coordinates of $\tilde z(s)$ for every $s\in[0,T]$. By definition $\tilde r_x = r(0)$ and $\tilde q_x = q(0)$.  Define the reduced observable
\begin{gather*}
\tilde\psi := (\phi-\bar\phi_\Lambda) \circ \gamma_x :  \tilde D_x(\tau,\epsilon) \to \mathbb{R}.
\end{gather*}
Both  $\|\tilde\psi\|_\infty$ and $ \Lip_x(\tilde\psi)$ are bounded from above  by $\Lip(\phi)\Lip(\Gamma)( 1 + \diam(\Omega_{AS}))$. 

As $\tilde\psi = (\phi-\bar\phi_\Lambda) \circ \gamma_x $ is Lipschitz continuous, it is differentiable almost everywhere,  with a derivative in the distribution sense in $L^\infty(\tilde D_x( \tau,\epsilon))$. 

\medskip
{\it Item \ref{Item:PositiveLivsicChecking_1}.} Assume $\tilde z(T) \in \partial^+ \tilde D_x(\tau,\epsilon)$.  Define a primitive of $\tilde \psi(\cdot, w)$ for $w$ fixed by
\[
\forall\, t\in(-2\epsilon,2\tau), \ \forall\, w\in B_x(2\epsilon), \ F(t,w) := -\int_t^{{\tilde\tau_{x,y}}(w)} \! \tilde\psi(s,u) \, ds.
\]
For every $t \in[-\tau,2\tau]$ and for almost everywhere $u \in B_x(2\epsilon)$
\[
\frac{\partial F}{\partial t}(t,w)  = \tilde\psi(t,w), \quad \frac{\partial F}{\partial w}(t,w) = -\nabla {\tilde\tau_{x,y}}(w) \tilde \psi(t,w) - \int_t^{{\tilde\tau_{x,y}}(w)} \! \frac{\partial \tilde\psi}{\partial w}(s,w) \, ds.
\]
Define
\begin{gather*}
\|DF\|_x := \Big\| \frac{\partial F}{\partial t}\Big\|_x+ \Big\| \frac{\partial F}{\partial w} \Big\|_x, \quad \Big\| \frac{\partial F}{\partial w} \Big\|_x :=\sup_{(t,w) \in \tilde D(\tau,\epsilon)} \Big\| \frac{\partial F}{\partial w}(t,w) \Big\|_x.
\end{gather*}
Using the estimate  ${\tilde\tau_{x,y}} \leq 2\tau$, $-\tau \leq t \leq 2\tau$ and $\|\nabla {\tilde\tau_{x,y}} \|_x \leq 1$
\begin{gather*}
\Big\| \frac{\partial F}{\partial t}\Big\|_x \leq  \| \tilde\psi \|_\infty \leq \Lip(\phi)  \diam(\Omega_{AS}), \\
\Big\| \frac{\partial \tilde \psi}{\partial w} \Big\|_x \leq \Lip(\phi) \Lip(\Gamma), \quad
\Big\| \frac{\partial F}{\partial w} \Big\|_x  \leq \|\tilde\psi\|_\infty + 3\tau \Big\| \frac{\partial \tilde \psi}{\partial w} \Big\|_x, \\
\|DF\|_x \leq 3 \Lip(\phi)(\tau \Lip(\Gamma)+\diam(\Omega_{AS})).
\end{gather*}
Let
\begin{gather*}
C_1 := 6  \Lip(\phi)\Lip(\Gamma)(\tau \Lip(\Gamma)+\diam(\Omega_{AS})).
\end{gather*}
Then for every $C \geq C_1$,
\begin{gather}
\|DF\|_x\leq \frac{C}{2 \Lip(\Gamma)}. \label{Equation:PositiveLivsicChecking_4}
\end{gather}
The key identity is  to replace $\tilde \psi \circ  \tilde z$ by  the derivative with respect to $t$ of a function of the form $F \circ \tilde z$. We have
\begin{align}
\tilde\psi(r(s), q(s)) &= \frac{\partial F}{\partial t}(\tilde z(s)) \notag \\
&= \frac{\partial F}{\partial t} \big(\tilde z(s)) (1-r'(s) \big) - \frac{\partial F}{\partial w}(\tilde z(s)) q'(s) + \frac{d}{ds}\big(F(\tilde z(s)) \big). \label{Equation:PositiveLivsicChecking_3}
\end{align}
Using Cauchy-Schwarz inequality one obtains,
\begin{gather}
e_1 := (1,0),  \quad \|e_1-\tilde z'(s)\|^2=(1-r'(s))^2+q'(s)^2, \notag \\
\Big| \frac{\partial F}{\partial t} \big(\tilde z(s)) (1-r'(s) \big) - \frac{\partial F}{\partial w} (\tilde z(s)) q'(s) \Big|  \leq \|DF\|_x  \, \|e_1-\tilde z'(s) \|_x. \label{Equation:PositiveLivsicChecking_2}
\end{gather}
We obtain the main estimate: using \eqref{Equation:PositiveLivsicChecking_4},  \eqref{Equation:PositiveLivsicChecking_3} and \eqref{Equation:PositiveLivsicChecking_2}, one has
\begin{gather}
\tilde\psi(\tilde z(s))  
\geq \frac{d}{ds}\big(F(\tilde z(s)) \big) - \frac{C}{2 \Lip(\Gamma)}\|e_1-\tilde z'(s)\|_x. \label{Equation:PositiveLivsicChecking_2}
\end{gather}
Integrating over $[0,T]$, we have
\begin{align}
\int_0^T \! \|e_1-\tilde z'(s) \|_x \, ds &\geq \Big\| \int_0^T \! (e_1-\tilde z'(s)) \, ds \Big\|_x \notag \\
&\geq \| \big(T-r(T)+r(0),-q(T)+q(0) \big)\|_x \label{Equation:PositiveLivsicChecking_7} \\
&\geq  \|q(T)-q(0)\|_x. \label{Equation:PositiveLivsicChecking_5}
\end{align}
Using $\| \nabla {\tilde\tau_{x,y}} \|_x \leq 1$, we have
\begin{gather*}
|{\tilde\tau_{x,y}}(q(T))-{\tilde\tau_{x,y}}(q(0))| \leq  \|q(T)-q(0)\|_x, \\
\|q(T)-q(0)\|_x \geq \frac{1}{\sqrt{2}} \| \big({\tilde\tau_{x,y}}(q(T))-{\tilde\tau_{x,y}}(q(0)), q(T)-q(0) \big) \|_x.
\end{gather*}
By definition of  $(\tilde r_x ,\tilde q_x)$ and $(\tilde r_y ,\tilde q_y)$, by definition of the local Poincar\'e section and the local return time ${\tilde\tau_{x,y}}$, we have
\begin{gather*}
\begin{cases}
\gamma_y(0,f_{x,y}(\tilde q_x)) = \gamma_x({\tilde\tau_{x,y}}(q(0)),q(0)), \\
\gamma_y(0,\tilde q_y) = \gamma_x({\tilde\tau_{x,y}}(q(T)),q(T)) \in \Sigma_y,
\end{cases} \label{Equation:PositiveLivsicChecking_1}
\end{gather*}
and therefore
\begin{gather}
 \| \big({\tilde\tau_{x,y}}(q(T))-{\tilde\tau_{x,y}}(q(0)), q(T)-q(0) \big) \|_x \geq \frac{1}{\Lip(\Gamma)^{2}} \| f_{x,y}(\tilde q_x) - \tilde q_y \|_y, \notag \\
 \|q(T)-q(0)\|_x \geq \frac{1}{\sqrt{2}\Lip(\Gamma)^{2}} \| f_{x,y}(\tilde q_x) - \tilde q_y \|_y. \label{Equation:PositiveLivsicChecking_6}
\end{gather}
Using the local conjugacy to the constant flow $e_1$ 
\begin{align*}
z(T) &= \gamma_y(\tilde r_y ,\tilde q_y) = f^{\tilde r_y} \circ \gamma_y(0,\tilde q_y) \\
&=  f^{\tilde r_y} \circ  \gamma_x({\tilde\tau_{x,y}}(q(T)),q(T)) = \gamma_x(\tilde r_y + {\tilde\tau_{x,y}}(q(T)),q(T)) \\
&= \gamma_x(r(T),q(T)),
\end{align*}
on obtains
\begin{gather*}
r(T) = \tilde r_y + {\tilde\tau_{x,y}}(q(T)), \\
\| V \circ z(s) - z'(s) \| \geq \Lip(\Gamma)^{-1} \| e_1 - \tilde z'(s) \|_x.
\end{gather*}
Then using \eqref{Equation:PositiveLivsicChecking_2}, \eqref{Equation:PositiveLivsicChecking_5}, \eqref{Equation:PositiveLivsicChecking_6}, one has
\begin{align*}
\mathcal{A}_{\phi,C}(z) &= \int_0^T \! \Big( \tilde\psi \circ \tilde z(s) + C\| V \circ z(s) - z'(s) \| \Big) \, ds \\
&\geq \int_0^T \! \Big( \tilde\psi \circ \tilde z(s) + \frac{C}{\Lip(\Gamma)} \| e_1 - \tilde z'(s) \|_x \Big) \, ds \\
&\geq  \int_0^{T} \! \frac{d}{ds}\big(F(\tilde z(s)) \big) \, ds + \frac{C}{2 \Lip(\Gamma)} \|q(T)-q(0)\|_x \\
&\geq F(\tilde z(T)) - F(\tilde z(0)) + \frac{C}{2\sqrt{2} \, \Lip(\Gamma)^3} \| f_{x,y}(\tilde q_x) - \tilde q_y \|_y.
\end{align*}
Using  the identity $\gamma_x(s+{\tilde\tau_{x,y}}(q(T)),q(T)) = \gamma_y(s,\tilde q_y)$, we obtain
\begin{align*}
F(\tilde z(0)) &= -\int_{r(0)}^{{\tilde\tau_{x,y}}(q(0))} \! \tilde\psi(s,q(0)) \,ds \\
&= -\int_0^{{\tilde\tau_{x,y}}(\tilde q_x)} \tilde\psi (s,\tilde q_x) \, ds + \int_0^{\tilde r_x} \! (\phi -\bar\phi_\Lambda) \circ \gamma_x(s,\tilde q_x) \, ds  = -\Phi_{x,y} + \Psi_x,\\
F(\tilde z(T)) &= - \int_{r(T)}^{{\tilde\tau_{x,y}}(q(T))} \! \tilde\psi(s,q(T)) \, ds = \int_0^{r(T)-{\tilde\tau_{x,y}}(q(T))} \tilde\psi(s+{\tilde\tau_{x,y}}(q(T)),q(T)) \,ds = \Psi_y.
\end{align*}

\medskip
{\it Item \ref{Item:PositiveLivsicChecking_2}.} Assume $\tilde z(T) \in \partial^- \tilde D_x(\tau,\epsilon)$.   Define a different primitive of $\tilde\psi(\cdot, w)$ by
\[
\forall\, t\in(-2\epsilon,2\tau), \ \forall\, w\in B_x(2\epsilon), \ F(t,w) := \int_0^{t} \! \tilde\psi(s,w) \, ds.
\]
Then as above we have almost everywhere
\begin{gather*}
\frac{\partial F}{\partial t}(t,w)  = \tilde\psi(t,w), \quad \frac{\partial F}{\partial w}(t,w) =  \int_0^{t} \! \frac{\partial \tilde\psi}{\partial w}(s,w) \, ds,  \\
\Big\| \frac{\partial F}{\partial t}\Big\|_x \leq  \| \tilde\psi \|_\infty, \quad \Big\| \frac{\partial F}{\partial w} \Big\|_x \leq 2\tau \Big\| \frac{\partial \tilde \psi}{\partial w} \Big\|_x,
\end{gather*}
and for every $C\geq C_1$,
\begin{gather}
\|DF\|_x \leq 2  \Lip(\phi)(\tau \Lip(\Gamma) + \diam(\Omega_{AS})) \leq \frac{C}{2 \Lip(\Gamma)}. \label{Equation:PositiveLivsicChecking_8}
\end{gather}
Using  \eqref{Equation:PositiveLivsicChecking_3}, one obtains
\begin{align*}
\tilde\psi(r(s), q(s)) &= \frac{\partial F}{\partial t}(\tilde z(s)) \\
&= \frac{\partial F}{\partial t} \big(\tilde z(s)) (1-r'(s) \big) - \frac{\partial F}{\partial w}(\tilde z(s)) q'(s) + \frac{d}{ds}\big(F(\tilde z(s)) \big).
\end{align*}
Using \eqref{Equation:PositiveLivsicChecking_7}, one obtains
\begin{multline}
\mathcal{A}_{\phi,C}(z)  \geq F(\tilde z(T)) - F(\tilde z(0)) \\
+ \frac{C}{2 \Lip(\Gamma)}  \| \big(T-r(T)+r(0),-q(T)+q(0) \big)\|_x. \label{Equation:PositiveLivsicChecking_11}
\end{multline}
Either $\tilde z(T) \in [-2\epsilon,\tau] \times \partial B_x(2 \epsilon)$,
\begin{gather*}
\| \big(T-r(T)+r(0),q(T)-q(0) \big)\|_x  \geq \|q(T)-q(0)\|_x \geq \epsilon, 
\end{gather*}
or $\tilde z(T) \in \{-2\epsilon\} \times \overline{B_x(2\epsilon)}$, $r(T)=-2\epsilon$,
\begin{multline*}
\| \big(T-r(T)+r(0),q(T)-q(0) \big)\|_x  \\ 
\geq |T-r(T)+r(0)| = (T+2\epsilon-r(0)) \geq \epsilon. 
\end{multline*}
In both cases, using \eqref{Equation:PositiveLivsicChecking_11}, one obtains
\begin{gather}
\mathcal{A}_{\phi,C}(z)  \geq  \frac{C \epsilon}{2 \Lip(\Gamma)} - \big| F(\tilde z(T)) - F(\tilde z(0)) \big|. \label{Equation:PositiveLivsicChecking_9}
\end{gather}
Using $\epsilon < \frac{\tau}{2}$ one obtains
\begin{gather*}
\| \tilde z(T)-\tilde z(0) \|_x \leq \sqrt{(4\epsilon)^2+(3\tau)^2} \leq 4 \, \tau.
\end{gather*}
Using \eqref{Equation:PositiveLivsicChecking_8} one obtains
\begin{gather*}
| F(\tilde z(T)) - F(\tilde z(0)) |   \leq 8 \, \tau \Lip(\phi)(\tau\Lip(\Gamma)+\diam(\Omega_{AS})), \end{gather*}
For every $C \geq \frac{16}{3} \frac{\tau}{\epsilon}C_1$
\begin{gather}
| F(\tilde z(T)) - F(\tilde z(0)) | \leq \frac{C \epsilon}{4\Lip(\Gamma)}. \label{Equation:PositiveLivsicChecking_10}
\end{gather}
Let for every $\mathcal{A}>0$,
\begin{gather*}
C_2(\mathcal{A}) := \max \Big( \frac{16}{3} \frac{\tau}{\epsilon}C_1, \frac{4}{\epsilon}\Lip(\Gamma) \mathcal{A} \Big).
\end{gather*} 
Then for every $C \geq C_2(\mathcal{A})$, using   \eqref{Equation:PositiveLivsicChecking_9}, \eqref{Equation:PositiveLivsicChecking_10},  one obtains
\begin{gather*}
\mathcal{A}_{\phi,C}(z)  \geq \frac{C\epsilon}{4 \Lip(\Gamma)} \geq \mathcal{A}.
\end{gather*}

{\it Item \ref{Item:PositiveLivsicChecking_3}.} We bound from below the penalized action as in item \ref{Item:PositiveLivsicChecking_2}. The path may not exit the trap box and we are left to  the estimate \eqref{Equation:PositiveLivsicChecking_11}. We then obtain
\begin{align*}
\mathcal{A}_{\phi,C}(z)  &\geq - | F(\tilde z(T)) - F(\tilde z(0)) | \\
&\geq -8\tau \Lip(\phi)(\tau\Lip(\Gamma)+\diam(\Omega_{AS})) = \mathcal{A}_1.\qedhere
\end{align*}
\end{proof}

We now prove the validity of the positive Liv\v{s}ic criterion (Definition \ref{Definition:PositiveLivcicCriteria}) in the context of locally maximal compact hyperbolic invariant set. 

We first redefine the notion of pseudo orbit in the context of flows. In words a pseudo orbit is a concatenation of paths of the first case (Lemma \ref{Lemma:PositiveLivsicChecking}) that start in a local ball $U_{x_i}$, end at a distinct local ball $U_{x_{i+1}}$, and at the same time time exit the trap box at the forward boundary $\partial^+ D_{x_i}(\tau,\epsilon)$.

\begin{definition} \label{Definition:PeriodicPseudoOrbitFlow}
Let $\Gamma$ be a family of adapted local flow boxes as defined in Definition~\ref{Definition:AdaptedLocalFlowBoxes}, $\epsilon < \frac{1}{2} \tau$, and $(D_x(\tau,\epsilon))_{x \in\Lambda}$, $(U_x(\epsilon))_{x \in \Lambda}$ be respectively the family of local trap boxes of size $(\tau,\epsilon)$ and local balls of size $\epsilon$ as defined in Notation~\ref{Definition:NotationsReducedPositiveLivsic}. Let   $\Lambda_* \subseteq \Lambda$ be a finite set, and $\Omega \supseteq \Lambda$ be an open set such that $\Omega \subseteq \bigcup_{x \in \Lambda_*}U_x(\epsilon)$.  A {\it $(\Lambda_*,\tau,\epsilon)$-pseudo orbit} is a  piecewise $C^1$ continuous  path $z:[0,T] \to \Omega$ such that there exist $N\geq1$, points $x_0,\ldots,x_N$ in $\Lambda_*$ and an increasing sequence of times  $T_0=0 < T_1< \cdots < T_N=T$ such that
\begin{enumerate}
\item $\forall\, i \in \llbracket 0,N\rrbracket, \ z(T_i) \in U_{x_i}(\epsilon)$,
\item $\forall\, i \in \llbracket 0,N-1\rrbracket, \ \forall\, t \in [T_{i},T_{i+1}], \ z(t) \in D_{x_{i}}(\tau,\epsilon)$,
\item $\forall\, i \in \llbracket 0,N-1\rrbracket, \ z(T_{i+1}) \in \partial^+D_{x_i}(\tau,\epsilon)$.
\end{enumerate}
The times $(T_i)_{i=0}^N$ are called {\it cutting times}, and the points $(x_i)_{i=0}^N$ the {\it cutting points}. A periodic $(\Lambda_*,\epsilon)$-pseudo orbit satisfies in addition
\begin{enumerate}
\addtocounter{enumi}{3}
\item $x_0=x_N$.
\end{enumerate}
\end{definition}

We also recall a more precise version of the Anosov shadowing Lemma  \cite[Theorem 2.1]{SuThieullenYu2021} for discrete  pseudo orbits between Poincar\'e sections. In that version the total sum of the distance between the pseudo orbit and the periodic orbit is bounded by the total sum of the  errors made during the shadowing approximation. The standard Anosov shadowing lemma gives a bound of the supremum of the distances with respect to the supremum of the errors  but takes into account the number errors.

\begin{lemma}[Adapted Anosov shadowing Lemma,  {\cite[Theorem 2.1]{SuThieullenYu2021}}] \label{Lemma:AdaptedAnosovShadowingLemma}
Let $\Gamma$ be a family of adapted local flow boxes, $\Lambda_* \subseteq \Lambda$ be  a finite set, $(x_i)_{i=0}^N$ be a periodic sequence,  $x_N=x_0$,  of points of $\Gamma$ so that   $(x_{i-1}, x_i)$ is forward admissible as  in item \ref{Item:AdaptedLocalFlowBoxes_7bis} in Definition \ref{Definition:AdaptedLocalFlowBoxes}. Let $B_i(\rho) = B_{x_i}(\rho)$ be the  adapted balls and $f_i = f_{x_i,x_{i+1}} : B_i(\rho) \to B_{i+1}(1)$ be the local Poincar\'e map. Then there exists a constant $K_\Lambda \geq1$ such that, for every periodic pseudo orbits $(q_i)_{i=0}^N$  in the sense 
\[
\forall\,  i \in \llbracket 0, N-1 \rrbracket, \ q_i\in B_i(\rho/2), \ f_i(q_i), q_{i+1}  \in B_{i+1}(\rho/2),
\]
there exists a periodic orbit $(p_i)_{i=0}^N$, $p_N=p_0$ such that
\[
\begin{cases}
\forall\, i \in \llbracket 0,N-1 \rrbracket$, $f_i(p_i)=p_{i+1}, \\
\sum_{i=0}^{N-1} \|q_i-p_i\|_i \leq  K_\Lambda \sum_{i=0}^{N-1} \|f_i(q_i) - q_{i+1} \|_{i+1}.
\end{cases}
\]
\end{lemma}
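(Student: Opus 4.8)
The plan is to turn the shadowing problem into a Banach fixed point for the perturbation $v_i := p_i - q_i$, and to exploit the uniform hyperbolicity of the Poincar\'e return maps read in the adapted coordinates. Working in the adapted splitting $\mathbb{R}^d = \mathbb{R}^{d^u} \times \mathbb{R}^{d^s}$ with the box norm $\|(a,b)\|_i = \max(\|a\|^u_i,\|b\|^s_i)$ attached to $x_i$, the definition of an adapted family of flow boxes (Definition \ref{Definition:AdaptedLocalFlowBoxes}) together with the fact that the couples $(x_i,x_{i+1})$ are forward admissible and that the return times $\tilde\tau_{x_i,x_{i+1}}$ stay in a fixed interval around $\tau$ gives each $f_i = f_{x_i,x_{i+1}} : B_i(\rho)\to B_{i+1}(1)$ a $C^1$, uniformly hyperbolic normal form: writing $f_i(a,b)=(g_i(a,b),h_i(a,b))$ there are constants $0<\mu<1<\lambda$, independent of $i$ and $N$, such that the horizontal cone is mapped into itself and expanded by at least $\lambda$, the vertical cone is mapped into itself and expanded by $f_i^{-1}$ by at least $1/\mu$; equivalently the linearizations $A_i := Df_i(q_i)$ preserve the adapted splitting up to off-diagonal terms that the adaptedness makes as small as we wish, with $\|(A_i^u)^{-1}\|\le \lambda^{-1}$ and $\|A_i^s\|\le\mu$. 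Since $\Lambda$ is compact and only finitely many boxes occur, each $Df_i$ is uniformly continuous on $B_i(\rho)$, so $f_i(q_i+v) = f_i(q_i) + A_i v + R_i(v)$ with $R_i(0)=0$ and $\Lip(R_i|_{B_i(r)})\le\eta(r)$, where $\eta(r)\to 0$ as $r\to0$, uniformly in $i$. (Alternatively one may dispense with derivatives and run the argument below using only the Lipschitz cone estimates for $f_i$ and $f_i^{-1}$; the derivative version is cleaner to state.)

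Let $\mathcal{E}$ be the space of $N$-periodic sequences $v=(v_i)_{i\in\mathbb{Z}/N\mathbb{Z}}$, $v_i\in\mathbb{R}^d$, equipped with \emph{both} $\|v\|_\infty := \max_i\|v_i\|_i$ and $\|v\|_1 := \sum_i\|v_i\|_i$. With $\Delta_i := f_i(q_i)-q_{i+1}$, the orbit equations $f_i(p_i)=p_{i+1}$, $p_N=p_0$, become the affine cohomological system $(\mathcal{L}v)_{i+1} := v_{i+1}-A_i v_i = \Delta_i + R_i(v_i)$ on $\mathcal{E}$. The heart of the proof is that $\mathcal{L}$ is invertible with $\|\mathcal{L}^{-1}\| \le K' = K'(\lambda,\mu)$ \emph{independently of $N$}, for both norms. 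One solves the vertical (stable) component forward in time, $v_i^s = \sum_{k\ge0}(A^s_{i-1}\cdots A^s_{i-k})\,w^s_{i-k}$, and the horizontal (unstable) component backward, $v_i^u = -\sum_{k\ge1}((A^u_i)^{-1}\cdots(A^u_{i+k-1})^{-1})\,w^u_{i+k}$; on $\mathbb{Z}/N\mathbb{Z}$ these are finite sums with geometric weights $\mu^k$, resp. $\lambda^{-k}$, corrected by a factor $(I-M)^{-1}$ with $\|M\|\le\mu^N$, resp. $\lambda^{-N}$, hence of norm bounded in terms of $\mu,\lambda$ only. For the $\ell^1$ bound one interchanges the two summations by Fubini, $\sum_i\sum_k\mu^k\|w^s_{i-k}\| = \frac{1}{1-\mu}\sum_j\|w^s_j\|$, which is exactly why a summable norm, rather than a finite-dimensional operator norm that would grow with $N$, must be used; the small off-diagonal terms are absorbed into $K'$ by a standard perturbation of the splitting.

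The shadowing orbit is then the fixed point of $\Theta(v) := \mathcal{L}^{-1}(\Delta + R(v))$. Choosing $r\le\rho/2$ small enough (this smallness, relative to $\lambda,\mu$, is part of what an \emph{adapted} family guarantees) so that $K'\eta(r)\le\tfrac12$, the map $\Theta$ is a $\tfrac12$-contraction of $\{\|v\|_\infty\le r\}$ once $K'\sup_i\|\Delta_i\| \le r/2$; the hypotheses $q_i,q_{i+1}\in B_i(\rho/2)$ and $f_i(q_i)\in B_{i+1}(\rho/2)$ control $\sup_i\|\Delta_i\|$ and, after shrinking the box scale in $\Gamma_\Lambda$ if necessary, make this hold. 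Banach's theorem yields a unique $v$ with $\|v\|_\infty\le r$; then $p_i=q_i+v_i \in B_i(\rho/2+r)\subseteq B_i(\rho)$, so $(p_i)_{i=0}^N$, $p_N=p_0$, is a genuine periodic orbit of $(f_i)$. Finally, taking $\ell^1$ norms in the identity $v=\mathcal{L}^{-1}(\Delta+R(v))$ and using $R(0)=0$, $\Lip(R|_{B(r)})\le\eta(r)$ gives $\|v\|_1 \le K'(\|\Delta\|_1 + \eta(r)\|v\|_1)$, hence $\sum_i\|q_i-p_i\|_i \le \frac{K'}{1-K'\eta(r)}\sum_i\|f_i(q_i)-q_{i+1}\|_{i+1}$, i.e. the claim with $K_\Gamma := K'/(1-K'\eta(r))$.

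The main obstacle is the linear step and its quantitative bookkeeping: first, extracting from the adapted-flow-box and forward-admissibility hypotheses the clean $N$-uniform block-hyperbolic normal form for the $f_i$, including that the off-diagonal coupling can be made as small as needed by using the adapted norms; and second, proving the $N$-independent bound on $\|\mathcal{L}^{-1}\|_{\ell^1}$, where the periodic wrap-around and the interchange of the two geometric summations must be handled carefully — a careless estimate here produces a constant growing with $N$ and ruins the lemma. Matching the three scales (jump size, box radius $\rho$, and the constant $K'$) so that the iteration stays inside the common domain of the $f_i$ is the remaining delicate point; everything else (uniform continuity of $Df_i$, the contraction estimate, passing from $\ell^\infty$ existence to the $\ell^1$ estimate) is routine.
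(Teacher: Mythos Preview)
The paper does not actually prove this lemma: it is quoted verbatim as Theorem~2.3 of \cite{SuThieullenYu2021} and used as a black box, so there is no in-paper proof to compare yours against.

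Your proposal is nonetheless the standard and correct route to this ``improved'' shadowing lemma with summable control. One technical point is worth tightening. You linearize at $q_i$, setting $A_i=Df_i(q_i)$ and then invoking uniform continuity of $Df_i$ to make $\Lip(R_i|_{B(r)})$ small. But the adapted structure (Definition~\ref{Definition:AdaptedLocalHyperbolicMap}) only assumes $f_i$ Lipschitz, with a \emph{fixed} linear map $A_{x_i,x_{i+1}}=Df_i(0)$ and the global bound $\Lip(f_i-A_{x_i,x_{i+1}})\le\eta$. If instead you take $A_i:=A_{x_i,x_{i+1}}$, the remainder $R_i(v)=f_i(q_i+v)-f_i(q_i)-A_iv$ satisfies $\Lip(R_i)\le\eta$ on the whole of $B_i(\rho)$, with no appeal to $C^1$ regularity and no need to shrink $r$ to force $\eta(r)$ small: the smallness of $\eta$ relative to $\sigma^u-1$ and $1-\sigma^s$ is already built into the adapted constants (item~\ref{Item:AdaptedLocalFlowBoxes_1} of Definition~\ref{Definition:AdaptedLocalFlowBoxes}). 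With this choice your $K'\eta\le\tfrac12$ contraction condition is automatic, and the $\ell^\infty$ closing constraint reduces to the stated hypotheses $q_i,f_i(q_i)\in B_{i+1}(\rho/2)$.

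The heart of your argument --- inverting $\mathcal L$ by summing the stable block forward and the unstable block backward around $\mathbb Z/N\mathbb Z$, and then getting the $N$-independent $\ell^1$ bound by the Fubini interchange $\sum_i\sum_k \mu^k\|w_{i-k}\| = (1-\mu)^{-1}\sum_j\|w_j\|$ --- is exactly the mechanism that produces the summable estimate $\sum_i\|q_i-p_i\|_i \le K_\Gamma\sum_i\|f_i(q_i)-q_{i+1}\|_{i+1}$, and your identification of this step as the delicate one is correct.
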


The proof of Theorem \ref{Theorem:ContinuousPositiveCriteria} is a consequence of the following three lemmas. In both lemmas we consider pseudo orbit that are concatenation of paths of the first case of Lemma \ref{Lemma:PositiveLivsicChecking}. We recall some notations. Let  $(\tau,\epsilon)\in \mathbb{R}_+^2$, $T>0$, $\Lambda_* \subset \Lambda$  be a finite set, $\epsilon_{AS} < \epsilon$, $\Omega_{AS} \subset \bigcup_{x\in \Lambda_*}U_x(\epsilon)$ be a neighborhood of $\Lambda$, and $N_* := \Card(\Lambda_*)$ as in Notation \ref{Definition:NotationsReducedPositiveLivsic}.  Let $z : [0,T] \to \Omega_{AS}$ be a $(\Lambda_*, \tau,\epsilon)$-pseudo orbit satisfying  Definition \ref{Definition:PeriodicPseudoOrbitFlow}.

In the first lemma the pseudo orbit is periodic.    We use essentially item~\ref{Item:PositiveLivsicChecking_1} of Lemma \ref{Lemma:PositiveLivsicChecking} and the  adapted Anosov shadowing Lemma \ref{Lemma:AdaptedAnosovShadowingLemma}. The proof consists  in writing the penalized action $\mathcal{A}_{\phi,C}(z)$ of a pseudo orbit $z : [0,T] \to \Omega$  as a sum  of a coboundary $\Psi_{i+1} - \Psi_i$ and  a penalized discrete action $\Phi_i(\tau_i,q_i)$ computed on a true orbit $t \in [0,\tau_i] \mapsto f^t(z_i) \in \Omega$ between two Poincar\'e sections $\Sigma_{x_i}, \Sigma_{x_{i+1}}$ at successive cutting points $x_i$. 

\begin{lemma} \label{Lemma:PeriodicPseudoBirkhoffIntegral}
Assume $z:[0,T] \to \Omega_{AS}$ is a periodic pseudo orbit. Define 
\begin{gather*}
C_3 := 4\sqrt{2} \, \Lip(\Gamma)^3 \Lip(\phi)(  \tau \Lip(\Gamma) + \diam(\Omega_{AS})) K_\Lambda, \\
\mathcal{A}_2^* := 2\epsilon \Lip(\phi) \diam(\Omega_{AS}).
\end{gather*}
Then for every $C \geq C_3$,
\[
\mathcal{A}_{\phi,C}(z) \geq -  \mathcal{A}_2^*.
\]
\end{lemma}

\begin{proof}
Using the notations of Definition \ref{Definition:PeriodicPseudoOrbitFlow}, by definition of a periodic $(\Lambda_*,\epsilon)$-pseudo orbit of length $N$, there exist a sequence of cutting points $(x_i)_{i=0}^N$,  $x_i \in \Lambda_*$, $x_N=x_0$, there exists a sequence of cutting  times $(T_i)_{i=0}^N$, $T_0=0 < T_1 < \cdots < T_N$ such that $z_i:=z(T_i) \in U_{x_i}(\epsilon)$. Let $(r_i,q_i) := \gamma_{x_i}^{-1}(z_i)$ be  the box coordinates of $z_i$ in the chart $\gamma_{x_i}$, with $|r_i| < \epsilon$ and $q_i \in  B_{x_i}(\epsilon)$. Let $ \tau_i (q) := \tilde \tau_{x_i,x_{i+1}}(q)$ be the   return time between  the two Poincar\'e sections $\Sigma_{x_i}$ and $\Sigma_{x_{i+1}}$ passing through $x_i$ and $x_{i+1}$ respectively as in item \ref{Item:AdaptedLocalFlowBoxes_5} of Definition \ref{Definition:AdaptedLocalFlowBoxes}. Let $f_i(q) = f_{x_i,x_{i+1}}(q) : B_{x_i}(\rho) \to B_{x_{i+1}}(1)$  be the local Poincar\'e map as in item \ref{Item:AdaptedLocalFlowBoxes_6} in Definition \ref{Definition:AdaptedLocalFlowBoxes}. 

Using  item \ref{Item:PositiveLivsicChecking_1} of Lemma \ref{Lemma:PositiveLivsicChecking} and $\tilde C := {C}/{\sqrt{8} \, \Lip(\Gamma)^3}$, one obtains
\begin{align*}
\mathcal{A}_i &:= \int_{T_i}^{T_{i+1}} \big[ (\phi - \bar \phi_\Lambda) z(s)  +  C \| V \circ z(s) -z'(s) \| \big] \, ds \\
&\geq \Phi_i(\tau_i(q_i),q_i) +\Psi_{i+1} -\Psi_i  + \tilde C \| f_{i}( q_i) -  q_{i+1}\|_{i+1},
\end{align*}
where for every $t\in(-\tau,2\tau)$ and $q \in B_{x_i}(\epsilon)$,
\begin{gather*}
\Phi_i (t,q) := \int_0^{t} \!  (\phi-\bar\phi_\Lambda) \circ \gamma_{x_i}(s,q) \, ds, \quad 
\Psi_i := \Phi_i(r_i,q_i).
\end{gather*}
Notice that, though $x_0=x_N$, $\Phi_0(t,q) = \Phi_N(t,q)$ for every $(t,q)$, but $z(0) \neq z(T)$ and $\Psi_0 \neq \Psi_N$. 

Then $(q_i)_{i=0}^N$ is a periodic pseudo orbit as in  Lemma \ref{Lemma:AdaptedAnosovShadowingLemma}. There exists a periodic orbit $(p_i)_{i=0}^N$, that is a sequence of points satisfying
\begin{gather*}
p_N=p_0, \ \forall\, i \in \llbracket 0, N-1\rrbracket, \ f_i(p_i)= p_{i+1}, \\
\sum_{i=0}^{N-1} \|q_i-p_i\|_i \leq  K_\Lambda \sum_{i=0}^{N-1}  \|f_{i}(q_{i}) - q_{i+1} \|_{i+1}.
\end{gather*}
Using the estimate $\|\nabla \tau_i\|_i \leq 1$ from item \ref{Item:AdaptedLocalFlowBoxes_5} of Definition \ref{Definition:AdaptedLocalFlowBoxes}, the Lipschitz constants of $\Phi_i$ and $\Psi_i$ can be computed in the following way
\begin{gather*}
\Big\| \frac{\partial \Phi_i}{\partial t}\Big\|_i \leq \Lip(\phi) \diam(\Omega_{AS}), \quad \Big\| \frac{\partial \Phi_i}{\partial q}\Big\|_i \leq 2\tau \Lip(\phi)\Lip(\Gamma), \\
\begin{split}
\| \Phi_i(\tau_i(q),q) -\Phi_i(\tau_i(p),p) \|_i &\leq  \Big[ \Big\| \frac{\partial \Phi_i}{\partial t}\Big\|_i+ \Big\| \frac{\partial \Phi_i}{\partial q}\Big\|_i  \Big]\, \|q-p \|_i \\
&\leq  2\Lip(\phi) \big( \tau \Lip(\Gamma) + \diam(\Omega_{AS}) \big)  \|q-p \|_i .
\end{split}
\end{gather*}
Summing over $i$ we obtain
\begin{align*}
\sum_{i=0}^{N-1} &\big[  \Phi_i(\tau_i(q_i),q_i) +\Psi_{i+1}(q_{i+1})-\Psi_i(q_i) \big] \\ 
&= \sum_{i=0}^{N-1}  \Phi_i(\tau_i(p_i),p_i) + \sum_{i=0}^{N-1} \big[ \Phi_i(\tau_i(q_i),q_i) - \Phi_i(\tau_i(p_i),p_i) \big] + \Psi_N - \Psi_0.
\end{align*}
As $(p_i)_{i=0}^N$ is a (discrete) periodic orbit, denoting 
\[
w_i := \gamma_{x_i,0}(p_i), \ \ S := \sum_{i=0}^{N-1}\tau_i(p_i),
\] 
one obtains $f^{\tau_i(p_i)}(w_i)=w_{i+1}$. Thus  $f^{S}(w_0)=w_N=w_0$, $(f^t(w_0))_{t \in\mathbb{R}}$ is a (continuous) periodic orbit of $\Lambda$ of period $S$, and
\[
\sum_{i=0}^{N-1}   \Phi_i(\tau_i(p_i),p_i) =\int_0^{S} (\phi-\bar \phi_\Lambda) \circ f^s(w_0) \, ds  \geq0.
\]
Moreover 
\begin{multline*}
\Big| \sum_{i=0}^{N-1} \big[ \Phi_i(\tau_i(q_i),q_i) - \Phi_i(\tau_i(p_i),p_i) \big] \Big| \\ 
\leq  2\Lip(\phi) \big( \tau \Lip(\Gamma) + \diam(\Omega_{AS}) \big)   \sum_{i=0}^{N-1} \| q_i - p_i \|,
\end{multline*}
and
\begin{gather*}
|\Psi_N - \Psi_0| \leq  |\Psi_N| + |\Psi_0| \leq 2\epsilon \Lip(\phi) \diam(\Omega_{AS}) = \mathcal{A}_2^*.
\end{gather*}
Using the shadowing Lemma \ref{Lemma:AdaptedAnosovShadowingLemma} and $\tilde C =  {C}/({2\sqrt{2} \, \Lip(\Gamma)^3})$, one obtains
\begin{align*}
\sum_{i=0}^{N-1} \mathcal{A}_i &\geq  \Psi_N(p_N)-\Psi_0(p_0) \\
&\quad + \Big( \tilde C -   2\Lip(\phi)(  \tau \Lip(\Gamma) + \diam(\Omega_{AS})) K_\Lambda \Big) \\
&\quad\quad\quad \times \sum_{i=0}^{N-1}\| f_{i}( q_i) -  q_{i+1}\|_{i+1} \geq -\mathcal{A}_2^*. \qedhere
\end{align*}
\end{proof}

In the second lemma  the pseudo orbit may not be periodic. Let $(x_i)_{i=0}^N$ be the cutting points of that pseudo orbit. We consider $\Lambda_*$ as the set of vertices of some graph where $(x,y)$ is an ordered edge if $(x,y)$ is $\Gamma$ admissible. Then $(x_i)_{i=0}^N$ can be seen as a non injective path in that graph. We show that it can be decomposed into a  self-avoiding path $(x_{i_0}, x_{i_1}, \ldots, x_{i_r})$, $i_0=0, i_r =N$, that connects periodic cycles $(x_{i_{k}}, x_{i_{k}+1}, \ldots, x_{i_{k+1}-1})$ pinned at $x_{i_{k+1}-1}=x_{i_k}$. The cardinality of the self-avoiding path, or the cardinality of the periodic cycles is less that the cardinality of $\Lambda_*$ (see Figure \ref{Figure:SelfAvoidingPath}).

\begin{figure}[hbt]
\centering
\includegraphics[width=1\textwidth]{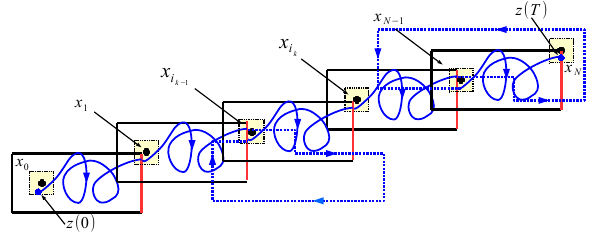}
\caption{A periodic pseudo orbits (in dash blue) $(x_{i_{k-1}}, x_{i_{k-1}+1}, \ldots, x_{i_k -1})$ where $x_{i_{k-1}} = x_{i_k-1}$ followed by an edge of the  self-avoiding path $(x_{i_k-1}, x_{i_k})$ (in blue). The figure corresponds to $i_r=N$ and $i_{r-1} = N-1$ since the last periodic pseudo orbit is followed by an edge  of the self-avoiding path $(x_{N-1},x_N)$.} \label{Figure:SelfAvoidingPath}
\end{figure}

\begin{lemma} \label{Lemma:PseudoPeriodicPath}
We do not assume that the pseudo orbit $z:[0,T] \to \Omega_{AS}$ is periodic. Let  $x_0, \ldots,x_N$ the cutting points (Definition~\ref{Definition:PeriodicPseudoOrbitFlow}). Then there exist $r \in \llbracket 1, N_* \rrbracket$ and $i_0 = 0 < i_1 < \cdots < i_r=N$,  such that
\begin{enumerate}
\item $x_{i_0},x_{i_1},\ldots,x_{i_{r-1}}$ are distinct,
\item if $k \in \llbracket 1,r-1\rrbracket$ then
\begin{enumerate}
\item either $i_k = i_{k-1}+1$, then $(x_{i_{k-1}}, x_{i_k})$ is an edge of the self-avoiding path,
\item or $i_k \geq i_{k-1}+2$, then $(x_{i_{k-1}}, x_{i_{k-1}+1}, \ldots, x_{i_k -1})$, is a periodic cycle pinned at $x_{i_k-1}=x_{i_{k-1}}$, followed by an edge $(x_{i_k-1},x_{i_k})$  of the self-avoiding path,
\end{enumerate} 
\item if $i_r \geq i_{r-1}+2$, then $(x_{i_{r-1}}, x_{i_{r-1}+1}, \ldots, x_{i_r})$ with $x_{i_r}=x_{i_{r-1}}$ is the last periodic cycle.
\end{enumerate}
\end{lemma}

\begin{proof}
By definition of a $(\Lambda_*,\tau,\epsilon)$-pseudo orbit, we have
\[
z(T_i) \in U_{x_i}(\epsilon), \ \forall\, t\in[T_i,T_{i+1}], \ z(t) \in D_{x_i}(\tau,\epsilon), \ z(T_{i+1}) \in \partial^+D_{x_i}(\tau,\epsilon).
\]
In particular $U_{x_i}(\epsilon) \bigcap U_{x_{i+1}}(\epsilon) = \emptyset$ and $x_{i} \neq x_{i+1}$. Assume by induction that we have constructed indices $i_0, \ldots,i_{k}$ in $\llbracket 0,N-1 \rrbracket$ satisfying the 3 items and  such that 
\[
\forall\, i \geq i_k, \ x_i \not\in \{ x_{i_0}, \ldots, x_{i_{k-1}} \}.
\]
Let be $I:=\{ i \geq i_k+1 : x_i = x_{i_k} \}$. If $I = \emptyset$, then $i_{k+1}=i_k+1$. If $I \not= \emptyset$, $j = \max(I)$, then $j > i_k+1$. Either $j<N$, then $i_{k+1}=j+1$, $x_{i_{k+1}-1}=x_j=x_{i_k}$, or $j=N$, then $k=r-1$, $i_r=N$,  $x_{i_r}=x_{i_{r-1}}$, and the induction step is proved.
\end{proof}

In the third lemma we bound from below  the penalized action $\mathcal{A}_{\phi,C}(z)$ of any pseudo orbit. We concatenate periodic pseudo orbits and a finite number of trap paths. The number of trap boxes $N_*$ that cover $\Lambda$ is involved in the computation. 

\begin{lemma} \label{lemma:FinitenessPseudoBirkhoffIntegral}
Let  $C\geq C_3$ as in Lemma \ref{Lemma:PeriodicPseudoBirkhoffIntegral} and
\[
\mathcal{A}_{3}^* := (\mathcal{A}_1^*+\mathcal{A}_2^*)N_*.
\]  
Then for every $(\Lambda_*,\tau,\epsilon)$-pseudo orbit $z:[0,T] \to \Omega_{AS}$, we have
\[
\mathcal{A}_{\phi,C}(z) \geq -\mathcal{A}_{3}^*.
\]
\end{lemma}

\begin{proof}
Using the notations of Lemma \ref{Lemma:PseudoPeriodicPath}, we decompose the pseudo orbit $z : [0,T] \to \Omega_{AS}$ into $r$ pseudo orbits $z_k : [T_{i_k},T_{i_{k+1}}] \to \Omega_{AS}$  where $k\in \llbracket 0,r-1 \rrbracket$.

Either ${i_{k+1}}={i_k}+1$. Then  $z_k([T_{i_k},T_{i_{k+1}}])$ is included into the trap box $D_{x_{i_k}}(\tau,\epsilon)$ and item \ref{Item:PositiveLivsicChecking_3} of Lemma \ref{Lemma:PositiveLivsicChecking} implies
\[
\mathcal{A}_{\phi,C}(z_k) \geq -\mathcal{A}_1^*.
\]
Or ${i_{k+1}}\geq {i_k}+2$.  Then both $ z(T_{i_k}), z(T_{i_{k+1}-1})$ belong to $U_{x_{i_k}}(\epsilon)$. The path $z_k : [T_{i_k},T_{i_{k+1}}] \to \Omega_{AS}$ is the concatenation of two paths (one path if $k=r-1$): the periodic cycle $z'_k : [T_{i_k},T_{i_{k+1}-1}] \to \Omega_{AS}$ followed by the trap path $z''_k : [T_{i_{k+1}-1},T_{i_{k+1}}] \to \Omega_{AS}$ included in the trap box $D_{x_{i_k}}(\tau,\epsilon)$  that is part of the self-avoiding path.  Lemma \ref{Lemma:PeriodicPseudoBirkhoffIntegral} and item \ref{Item:PositiveLivsicChecking_3} of Lemma \ref{Lemma:PositiveLivsicChecking} imply
\[
\mathcal{A}_{\phi,C}(z'_k) \geq - \mathcal{A}_2^*, \quad \mathcal{A}_{\phi,C}(z''_k) \geq -\mathcal{A}_1^*.
\]
We conclude the proof using  the fact that the path $z$ is the concatenation of $N_*$ paths $z_k$.
\end{proof}

\begin{proof}[\bf Proof of Theorem \ref{Theorem:ContinuousPositiveCriteria}]
Let $(\tau,\epsilon)\in \mathbb{R}_+^2$, $\Lambda_* \subset \Lambda$ a finite set, $\epsilon_{AS} < \epsilon$, $\Omega_{AS} \subset \bigcup_{x\in \Lambda_*}U_x(\epsilon)$, and $N_* := \Card(\Lambda_*)$ as in Notation \ref{Definition:NotationsReducedPositiveLivsic}. Define $\mathcal{A}_{3}^*$ as in Lemma \ref{lemma:FinitenessPseudoBirkhoffIntegral}, and $C_3$ as in Lemma \ref{Lemma:PeriodicPseudoBirkhoffIntegral}. Let
\begin{equation*}
\label{equation:Explicitepenalized Constant}
C_4 := \max(C_3,C_2(\mathcal{A}_3^*)),
\end{equation*}
where $C_2(\mathcal{A})$ has been defined in  Lemma \ref{Lemma:PositiveLivsicChecking}. 

Let $z:[0,T] \to \Omega_{AS}$ be a  piecewise $C^1$ continuous  path. We first decompose $z$ into $N\geq1$ subpaths $z_k:[T_{k-1},T_{k}] \to \Omega_{AS}$, where 
\begin{gather*}
T_0=0 < T_1 < \cdots <T_{N-1} \leq T_N=T,
\end{gather*} 
and $(x_0,x_1, \ldots, x_{N-1)}$ are  points in $\Lambda_*$ satisfying
\begin{itemize}
\item $\forall\, k \in \llbracket 0, N-1 \rrbracket, \ z(T_{k}) \in U_{x_{k}}(\epsilon)$, 
\item $\forall\, k \in \llbracket 1, N-1 \rrbracket, \ z(T_{k}) \in \partial D_{x_{k-1}}(\tau,\epsilon)$, 
\item $\forall\, k \in \llbracket 1, N \rrbracket, \ z([T_{k-1},T_{k}]) \subseteq \overline{D_{x_{k-1}}(\tau,\epsilon)}$.
\end{itemize}
Notice that $z(T_N)=z(T)$ may not belong to the boundary of a trap box and  $T_{N-1}=T_N$ if $z(T)$ belongs to the boundary of a trap box.  We then concatenate the subpaths $z_k$ into $r\geq1$ longer blocks $z_k : [T_{i_{k-1}},T_{i_{k}}] \to \Omega_{AS}$, where $i_0=0 < i_1 < \cdots < i_r=N$, such that each subpath $z_k$ is  one of the following 3 types:
\begin{itemize}
\item type I: $i_{k}=i_{k-1}+1$; the subpath  $z_k$ is an escaping path exiting at the backward boundary $z(T_{i_{k}}) \in \partial^- D_{x_{i_{k-1}}}(\tau,\epsilon)$, 
\item type II: $i_{k} \geq i_{k-1}+2$; the subpath $z_k$ is the concatenation of 2 paths,  a $(\Lambda_*,\tau,\epsilon)$-pseudo orbit $z'_k:[T_{i_{k-1}}, T_{i_k-1}]  \to  \Omega_{AS}$ where $z(T_{i_{k}-1}) \in \partial^+ D_{x_{i_{k}-2}}(\tau,\epsilon)$, followed by a second path $z''_k:[T_{i_k-1},T_{i_k}] \to  \Omega_{AS}$  consisting in an escaping path where $z(T_{i_k}) \in \partial^- D_{x_{i_k-1}}(\tau,\epsilon)$,
\item type III: $k=r$,   $z_{r} : [T_{N-1},T_N] \to \Omega_{AS}$ is a trap path.
\end{itemize}
Notice that type III may only happen as a terminal path $z_{r-1}$. 

If $z_k$ is of type I, if $C \geq C_2(\mathcal{A}_3^*)$, then item \ref{Item:PositiveLivsicChecking_2} of Lemma \ref{Lemma:PositiveLivsicChecking} implies that
\begin{gather*}
\mathcal{A}_{\phi,C}(z_k)\geq \mathcal{A}_3^* \geq 0.
\end{gather*}

If $z_k$ is of type II,   Lemma \ref{lemma:FinitenessPseudoBirkhoffIntegral} implies  $\mathcal{A}_{\phi,C}(z'_k) \geq -\mathcal{A}_{3}^*$ for the first path, item \ref{Item:PositiveLivsicChecking_2} of Lemma \ref{Lemma:PositiveLivsicChecking} implies   $\mathcal{A}_{\phi,C}(z''_k) \geq \mathcal{A}_3^*$ for the second path. Then we obtain the crucial estimate
\begin{gather*}
\mathcal{A}_{\phi,C}(z_k)  = \mathcal{A}_{\phi,C}(z'_k)  + \mathcal{A}_{\phi,C}(z''_k)  \geq0.
\end{gather*}
In conclusion the penalized action is non negative either for an escaping path, or for a pseudo orbit followed by an escaping path. We are left to the terminal path $\mathcal{A}_{\phi,C}(z_{r-1})  \geq   -\mathcal{A}_1^*$. The total penalized action is bounded from below by $\mathcal{A}_1^*$. To conclude we observe that $\Lip(\phi)$ can be factorized in the two constants $C_4$ and $\mathcal{A}_1^*$
\begin{gather*}
C_\Lambda := \frac{C_4}{\Lip(\phi)}, \quad \delta_\Lambda := \frac{\mathcal{A}_1^*}{\Lip(\phi)}. \qedhere
\end{gather*}
\end{proof}

%%%%%%%%%%%%%%%%%%%%%%
\section{The continuous Lax-Oleinik semigroup}
\label{Section:LaxOleinikSemigroup}
%%%%%%%%%%%%%%%%%%%%%%

We consider in this section a $C^1$ flow $(M,V,f)$, a compact connected invariant set $\Lambda$, and an open set $U$ containing $\Lambda$ of compact closure. We do not assume that $\Lambda$ is hyperbolic. We show that any Lipschitz observable satisfying the  positive Liv\v{s}ic criterion on $(U,\Lambda)$ admits a Lipschitz subaction on $U$ as in Definition~\ref{Definition:ContinuousErgodicOptimization}. The subaction is obtained as a fixed point of some nonlinear semigroup, called {\it Lax-Oleinik semigroup}. A similar definition  exists in  the context of weak KAM theory, see Fathi's monograph \cite{Fathi2016}. 

As $\Lambda$ is connected, we may choose $U$ connected.  We choose the following definition for the distance function between two points.

\begin{definition}
Let $U$ be a connected open set with compact closure. The {\it distance function between two points $p,q \in U$} is defined by
\begin{multline*}
d_U(p,q) =\inf \Big\{ \int_0^1 \|z'(s) \| \, ds : z : [0,1] \to U \ \text{is  continuous}, \\ 
\text{piecewise $C^1$}, \ z(0)=p \ \text{and} \  z(1)=q \Big\}.
\end{multline*}
Notice that $d_U(p,q)$ can be obtained by taking the infimum over paths defined on any interval $[0,T]$ instead of $[0,1]$.
\end{definition}

\begin{definition}[Lax-Oleinik Semigroup] \label{Definition:ContinuousLaxOleinikSemigroup}
Let  $\phi: U\to \mathbb{R}$ be a $C^0$ bounded function and  $C\geq0$ be a  constant. Assume $\phi$ satisfies the  positive Liv\v{s}ic criterion on $(U,\Lambda)$ with  penalized  constant $C$ as in Definition \ref{Definition:PositiveLivcicCriteria}.
\begin{enumerate} 
\item The {\it Lax-Oleinik semigroup on $(U,\Lambda)$ of generator $\phi$} is the nonlinear operator acting on bounded  functions $u : U \to \mathbb{R} $ defined for every $t > 0$ by, for every $q \in U$,
\begin{gather*}
T^t [u](q) :=\inf_{\substack{z:[-t,0] \to U \\ z(0)=q}}\Big\{u \circ z(-t)+\int_{-t}^{0} \big[\phi \circ z-\bar{\phi}_\Lambda+C \|V \circ z -z^{\prime}\| \big] \, ds\Big\} 
\end{gather*}
where the infimum is taken over the set of  piecewise $C^1$  continuous paths $z: [-t,0] \to U$ ending at $q$.

\item A {\it weak KAM solution} of the Lax-Oleinik semigroup is a bounded  function  $u : U \to \mathbb{R}$ solution of the equation
\[
\forall\, t > 0, \ T^{t}[u] = u.
\]
\end{enumerate}
\end{definition}

Notice that a weak KAM solution is a particular integrated subaction (item \ref{Label:ContinuousErgodicOptimization_2} of Definition \ref{Definition:ContinuousErgodicOptimization}). Indeed, taking $z(s) = f^s(p)$, one obtains $V \circ z(s) = z'(s)$ and
\begin{gather*}
u \circ f^t(p) = T^t[u](f^t(p) \leq u(p) + \int_0^t (\phi \circ z - \bar \phi_\Lambda) \circ f^s(p) \, ds.
\end{gather*}

\begin{theorem} \label{Theorem:ContinuousWeakKAMsolution}
Let $(M,V,f)$ be a $C^1$  flow, $\Lambda$ be a compact connected invariant set, and $U \supseteq \Lambda$ be a connected open set of compact closure. Let $\phi:U \to \mathbb{R}$ be a bounded Lipschitz continuous function and  $C\geq0$ be a  constant.  Assume $\phi$ satisfies the  positive Liv\v{s}ic criterion on  $(U,\Lambda)$ with penalized  constant $C$. Then there exists  a weak KAM solution  $u: U\to \mathbb{R}$ that is $C$-Lipschitz. Moreover $u$ is a Lipschitz continuous integrated subaction. 
\end{theorem}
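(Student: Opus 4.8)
The plan is to realize a weak KAM solution as a fixed point of the Lax-Oleinik semigroup, obtained by a Krylov--Bogolyubov-type averaging of the orbit of a well-chosen initial function, after first checking the semigroup is well defined and non-expansive on a suitable function space. First I would verify that the positive Liv\v sic criterion guarantees $T^t[u]$ is finite for bounded $u$: indeed, splitting any path $z:[-t,0]\to U$ ending at $q$ and using $\inf_{S>0}\inf_{z:[0,S]\to U}\mathcal{A}_{\phi,C}(z) > -\infty$ gives a uniform lower bound, while taking the constant path $z\equiv q$ gives an upper bound $u(q) + t(\phi(q)-\bar\phi_\Lambda)$. Next I would record the elementary but essential algebraic properties of $T^t$: monotonicity ($u\le v \Rightarrow T^t[u]\le T^t[v]$), commutation with additive constants ($T^t[u+c] = T^t[u]+c$), the semigroup law $T^{t+s} = T^t\circ T^s$ (by concatenating paths), and hence $\sup|T^t[u]-T^t[v]| \le \sup|u-v|$, i.e. $T^t$ is $1$-Lipschitz for the uniform norm.

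The key regularity step is to show that $T^t$ maps the set of $C$-Lipschitz functions into itself, so that we may work on $\mathcal{F} := \{u : U\to\mathbb{R} \text{ bounded}, \Lip(u)\le C\}$. For $p,q\in U$ and a near-optimal path $z$ ending at $q$, I would build a competitor ending at $p$ by prepending to $z$ a short path from $q$ to a point near $z(-t)$, or more cleanly by precomposing with a path of length $\approx d_U(p,q)$: the extra cost is controlled by $C\|V\circ z - z'\|$ along that short path, which, choosing the connecting path to move at unit speed against $-V$ if needed, contributes at most $C\,d_U(p,q)$ up to lower-order terms from $\phi$ and the length of the inserted segment; letting the segment length shrink shows $T^t[u](p) - T^t[u](q) \le C\,d_U(p,q)$, hence $\Lip(T^t[u])\le C$. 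I also need a uniform bound on $\|T^t[u]\|_\infty$ on $\mathcal{F}$, which follows from the positive Liv\v sic lower bound together with the constant-path upper bound and the a priori oscillation control $\mathrm{osc}(T^t[u]) \le C\,\diam(U)$.

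With $\mathcal{F}$ stable, non-expansive, and (by Arzel\`a--Ascoli, modulo normalizing by an additive constant) precompact in $C^0(\overline U)$, I would fix any $u_0\in\mathcal{F}$, consider the Ces\`aro averages $u_\theta := \frac1\theta\int_0^\theta T^s[u_0]\,ds$ (these lie in $\mathcal{F}$ after renormalization since $\mathcal{F}$ is convex), and extract a subsequence $u_{\theta_n}\to u_\infty$ uniformly. Using $\|T^t[u_\theta] - u_\theta\|_\infty = \frac1\theta\|\int_\theta^{\theta+t} T^s[u_0]\,ds - \int_0^t T^s[u_0]\,ds\|_\infty \le \frac{2t}\theta \sup_{s}\mathrm{osc}(T^s[u_0]) \to 0$ together with continuity of $T^t$ in $C^0$ and $1$-Lipschitzness, I get $T^t[u_\infty] = u_\infty$ for every $t>0$. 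Since $u_\infty\in\mathcal{F}$ it is $C$-Lipschitz, so it is the desired weak KAM solution.

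The main obstacle I anticipate is the stability of the $C$-Lipschitz class under $T^t$: one must insert the connecting path in a way that the penalty term $C\|V\circ z - z'\|$ exactly absorbs the displacement cost $C\,d_U(p,q)$ and no more, which is precisely why the distortion constant in the penalty and the target Lipschitz constant are the \emph{same} $C$. Care is also needed near $\partial U$ — paths are required to stay in $U$ — so one should either work with $d_U$ rather than the ambient distance throughout, or restrict attention to a slightly smaller neighborhood; taking $U$ connected (as arranged before the statement) and using $d_U$ makes the triangle-inequality manipulations legitimate. The finiteness/uniform-boundedness bookkeeping and the continuity of $t\mapsto T^t[u_0]$ in the uniform norm are routine once the basic estimates above are in place.
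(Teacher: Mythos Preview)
Your preparatory work---well-definedness of $T^t$, monotonicity, commutation with constants, the semigroup law, and $C$-Lipschitz stability---is correct and matches what the paper does (their Lemma~3.4 and Lemma~3.5, plus Step~1 of the proof). The serious gap is in your fixed-point step. The Lax--Oleinik operator $T^t[u](q)=\inf_p\{u(p)+\mathcal{A}^t_{\phi,C}(p,q)\}$ is \emph{nonlinear}: it is an infimum of affine functionals of $u$, hence concave, and in particular it does not commute with averaging. The identity you wrote,
\[
T^t[u_\theta]-u_\theta \;=\; \frac{1}{\theta}\Big(\int_\theta^{\theta+t}-\int_0^t\Big)T^s[u_0]\,ds,
\]
would require $T^t\big[\tfrac{1}{\theta}\int_0^\theta T^s[u_0]\,ds\big]=\tfrac{1}{\theta}\int_0^\theta T^{t+s}[u_0]\,ds$, which is false. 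Concavity gives at best the inequality $T^t[u_\theta]\ge \tfrac{1}{\theta}\int_0^\theta T^{t+s}[u_0]\,ds$, so your argument would only yield $T^t[u_\infty]\ge u_\infty$, not the fixed-point equality. Krylov--Bogolyubov averaging is a linear-operator device and does not transfer here without further work.

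The paper avoids this by a purely order-theoretic construction that exploits one extra property you did not list: $T^s$ commutes with pointwise infima over families, $T^s[\inf_t u_t]=\inf_t T^s[u_t]$ (immediate from the double-infimum form of $T^s$). They set $v:=\inf_{t>0}T^t[0]$, observe $T^s[v]=\inf_{t>s}T^t[0]\ge v$, so $s\mapsto T^s[v]$ is nondecreasing, and then take $u:=\sup_{s>0}T^s[v]=\lim_{n\to\infty}T^n[v]$. The inequality $T^t[u]\ge u$ is automatic from monotonicity; for $T^t[u]\le u$ they use $T^{t}[T^n[v]]=T^{n+t}[v]\le u$, pick near-minimizers $p_n$, extract an accumulation point in $\bar U$, and pass to the limit using the uniform $C$-Lipschitz bound. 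No averaging, no Arzel\`a--Ascoli, no convexity of a function class is needed---just monotonicity and the infimum-commutation property. If you want to repair your approach, the cleanest fix is to replace the Ces\`aro step by this monotone inf-then-sup scheme.
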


The proof of Theorem \ref{Theorem:ContinuousWeakKAMsolution} requires the following two lemmas. We first extend the definition of a penalized action between two points. 

\begin{definition}
\label{Definition:ActionBetweenPoints}
Let $\phi : U \to \mathbb{R}$ be a bounded continuous function and  $C\geq0$ be a  constant. The {\it penalized  action of $\phi$  between two points $p,q \in U$} with a penalized constant $C$ and a time laps $t>0$ is the quantity
\[
\mathcal{A}_{\phi,C}^t(p,q) := \inf_{\substack{z:[0,t] \to U \\ z(0)=p, \, z(t) =q}} \int_0^t \! \big[\phi \circ z-\bar{\phi}_\Lambda+C \|V \circ z -z^{\prime}\| \big] \, ds,
\]
where the infimum is realized over the set of  piecewise $C^1$  continuous  paths starting at $p$ and ending at $q$. 
\end{definition}

Notice that the Lax-Oleinik admits a simpler definition
\begin{gather*}
T^t[u](q) = \inf_{p \in U} \{ u(p) + \mathcal{A}_{\phi,C}(p,q) \}.
\end{gather*}

We show in the first lemma that the penalized action between two points  is $C$-Lipschitz.

\begin{lemma} \label{Lemma:ActionAprioriEstimate}
Let $\phi : U \to \mathbb{R}$ be a bounded Lipschitz function and  $C\geq0$ be a  constant. Then for every $p,\tilde p, q,\tilde q \in U$
\begin{enumerate} 
\item \label{Item:ActionAprioriEstimate_1} $\big| \mathcal{A}^t_{\phi,C}(p,q) - C d_U(p,q) \big| \leq t \big( \Lip(\phi)\diam(U)+C\|V\|_\infty \big)$,
\item \label{Item:ActionAprioriEstimate_2} $\big| \mathcal{A}^t_{\phi,C}(p,q) - \mathcal{A}^t_{\phi,C}(p,\tilde q) \big| \leq C d_U(q,\tilde q)$,
\item \label{Item:ActionAprioriEstimate_3} $\big| \mathcal{A}^t_{\phi,C}(p,q) - \mathcal{A}^t_{\phi,C}(\tilde p,q) \big| \leq C d_U(p,\tilde p)$,
\end{enumerate}
where $\diam(U) = \sup_{p,q \in U}d_U(p,q)$.
\end{lemma}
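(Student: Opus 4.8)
\textbf{Proof proposal for Lemma \ref{Lemma:ActionAprioriEstimate}.}

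The plan is to establish all three estimates by exploiting the structure of $\mathcal{A}^t_{\phi,C}$ as an infimum over paths, together with the fact that the additive integrand splits into a ``potential'' part $\phi\circ z-\bar\phi_\Lambda$ and a ``metric'' part $C\|V\circ z-z'\|$, the latter being the piece that dominates and controls the Lipschitz behaviour. For item \ref{Item:ActionAprioriEstimate_1}, I would argue by two-sided comparison. For the upper bound on $\mathcal{A}^t_{\phi,C}(p,q)-Cd_U(p,q)$: given any admissible path $z:[0,t]\to U$ from $p$ to $q$, the integrand is bounded above by $\Lip(\phi)\diam(U)+C\|V\|_\infty+C\|z'\|$ pointwise (using $\|V\circ z-z'\|\le\|V\|_\infty+\|z'\|$ and that $\phi-\bar\phi_\Lambda$ is bounded by $\Lip(\phi)\diam(U)$ up to an additive constant absorbed by the mean value — more cleanly, $|\phi\circ z(s)-\bar\phi_\Lambda|$ can be bounded since $\bar\phi_\Lambda$ lies between $\inf\phi$ and $\sup\phi$ and oscillation of $\phi$ on $U$ is at most $\Lip(\phi)\diam(U)$); integrating and taking the infimum over paths whose length approximates $d_U(p,q)$ gives the bound. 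For the lower bound: for \emph{every} admissible path, $\int_0^t C\|V\circ z-z'\|\,ds\ge \int_0^t C\|z'\|\,ds-tC\|V\|_\infty\ge Cd_U(p,q)-tC\|V\|_\infty$, while $\int_0^t(\phi\circ z-\bar\phi_\Lambda)\,ds\ge -t\Lip(\phi)\diam(U)$; taking the infimum over $z$ yields $\mathcal{A}^t_{\phi,C}(p,q)\ge Cd_U(p,q)-t(\Lip(\phi)\diam(U)+C\|V\|_\infty)$.

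For items \ref{Item:ActionAprioriEstimate_2} and \ref{Item:ActionAprioriEstimate_3}, the idea is path surgery: prepend or append a short geodesic-like segment traversed at unit speed in the $V$-direction-free manner, so that the extra cost is purely metric. Concretely, for item \ref{Item:ActionAprioriEstimate_2}, fix $\eta>0$ and let $w:[0,\ell]\to U$ be a near-geodesic from $\tilde q$ to $q$ parametrized by arclength with $\ell\le d_U(q,\tilde q)+\eta$. Given a near-optimal path $z:[0,t]\to U$ from $p$ to $\tilde q$ for $\mathcal{A}^t_{\phi,C}(p,\tilde q)$, one would like to concatenate $z$ with $w$; the subtlety is that concatenation changes the total time from $t$ to $t+\ell$, whereas $\mathcal{A}^t$ is defined with a fixed time laps $t$. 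I would resolve this by reparametrizing: replace $z$ on $[0,t]$ by a time-rescaled version on $[0,t-\ell']$ (for small $\ell'$ comparable to $\ell$) and fit $w$ into the remaining interval $[t-\ell',t]$ traversed fast, OR — cleaner — observe that the metric term $\|V\circ z-z'\|$ is invariant under the choice of parametrization only up to the $V$-correction, so I would instead run $w$ at high speed: parametrize the geodesic segment on a tiny time interval $[t-\delta,t]$ so that $\int\|w'\|\,ds=\ell$ regardless of $\delta$, and let $\delta\to 0$; then $\int_{t-\delta}^t C\|V\circ w-w'\|\,ds\to C\ell$ (the $\int C\|V\|$ contribution vanishes as $\delta\to0$) and $\int_{t-\delta}^t(\phi\circ w-\bar\phi_\Lambda)\,ds\to 0$. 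Simultaneously, compress $z$ into $[0,t-\delta]$ by reparametrization; since reparametrizing a path does not change $\int\|z'\|\,ds$ and changes $\int\|V\circ z\|\,ds$ continuously in the time-span, the action of the compressed $z$ converges to $\mathcal{A}^t_{\phi,C}(p,\tilde q)$ as $\delta\to0$. Letting $\eta,\delta\to0$ gives $\mathcal{A}^t_{\phi,C}(p,q)\le\mathcal{A}^t_{\phi,C}(p,\tilde q)+Cd_U(q,\tilde q)$; by symmetry (swapping the roles of $q,\tilde q$) we get the reverse inequality and hence \ref{Item:ActionAprioriEstimate_2}. Item \ref{Item:ActionAprioriEstimate_3} is identical with the short segment prepended at the initial endpoint rather than appended at the terminal one.

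\textbf{Main obstacle.} I expect the only real technical point to be the time-budget bookkeeping in items \ref{Item:ActionAprioriEstimate_2}--\ref{Item:ActionAprioriEstimate_3}: because $\mathcal{A}^t_{\phi,C}$ fixes the total time laps $t$, one cannot naively concatenate paths, and one must check that squeezing the geodesic correction into a vanishingly short time window costs only $Cd_U$ in the limit while the reparametrized bulk path retains its near-optimal action. This hinges on two elementary but necessary facts: (i) $\int\|z'\|\,ds$ is reparametrization-invariant, so the ``length'' contribution of the squeezed segment is $\ell$ no matter how fast it is traversed; and (ii) the term $\int C\|V\circ z\|\,ds$ and $\int(\phi\circ z-\bar\phi_\Lambda)\,ds$ over a time window of length $\delta$ are $O(\delta)$ and hence negligible as $\delta\to0$. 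Once these are isolated the argument is routine. One should also note that the continuous positive Liv\v{s}ic criterion is not actually needed for this lemma — only boundedness and the Lipschitz property of $\phi$ enter — so no appeal to Definition \ref{Definition:PositiveLivcicCriteria} is required here; the criterion will instead be used downstream to guarantee that $\mathcal{A}^t_{\phi,C}(p,q)$ (and the Lax-Oleinik orbit) stays bounded below, which is what makes the fixed point in Theorem \ref{Theorem:ContinuousWeakKAMsolution} exist.
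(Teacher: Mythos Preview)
Your proposal is correct and follows essentially the same strategy as the paper. The only minor difference is in the surgery for items \ref{Item:ActionAprioriEstimate_2}--\ref{Item:ActionAprioriEstimate_3}: rather than reparametrizing the bulk path onto $[0,t-\delta]$ and appending a fast geodesic from $\tilde q$ to $q$, the paper simply \emph{restricts} the near-optimal path $\tilde z$ to $[0,t-\tau]$ and appends a short path $w_\tau$ from $\tilde z(t-\tau)$ to $q$ on $[t-\tau,t]$; this sidesteps the reparametrization bookkeeping you identified as the main obstacle, at the modest cost of carrying the extra term $\int_{t-\tau}^t\|\tilde z'(s)\|\,ds$, which vanishes as $\tau\to 0$ since the fixed path $\tilde z$ is piecewise $C^1$.
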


\begin{proof}
{\it Item \ref{Item:ActionAprioriEstimate_1}.} Let $z: [0,t] \to U$ be a path joining $p$ and $q$. Then
\begin{multline*}
\Big| \int_0^t \big[\phi \circ z-\bar{\phi}_\Lambda+C \|V \circ z -z^{\prime}\| \big] \, ds - C \int_0^t \|z'(s)\| \, ds \Big| \\
\leq  t \big( \Lip(\phi)\diam(U) + C\|V\|_\infty \big).
\end{multline*}
We conclude the proof of item \ref{Item:ActionAprioriEstimate_1} by minimizing over such paths.

\medskip
{\it Item \ref{Item:ActionAprioriEstimate_2}.} Let $\tilde z : |0,t] \to U$ be a path joining $p$ and $\tilde q$, and $\tau>0$, $\epsilon>0$ supposed to be small. By definition of the distance $d_U$ as an infimum there exists a path $w_\tau : [t-\tau,t] \to U$ joining $\tilde z(t-\tau)$ and $q$ such that
\[
\int_{t-\tau}^t \|w'_\tau(s) \| \, ds \leq d_U(\tilde z(t-\tau),q) + \epsilon.
\]
Let $z:[0,t] \to U$ be the path obtained by concatenation of the restriction $\tilde z : [0,t-\tau] \to U$ and $w_\tau$. Then
\begin{align*}
\mathcal{A}_{\phi,C}^t(p,q) &\leq \int_0^t \big[\phi \circ z-\bar{\phi}_\Lambda+C \|V \circ z -z^{\prime}\| \big] \, ds \\
&\leq \int_0^{t} \big[\phi \circ \tilde z-\bar{\phi}_\Lambda+C \|V \circ \tilde z -\tilde z' \| \big] \, ds \\
&\quad + 2 \tau \big( \Lip(\phi)\diam(U) + C\|V\|_\infty \big) + \int_{t-\tau}^t C\big[ \|w'_\tau(s) \| + \| \tilde z'(s) \| \big] \, ds.
\end{align*}
Letting $\tau \to 0$ one obtains
\[
\mathcal{A}_{\phi,C}^t(p,q) \leq \int_0^{t} \big[\phi \circ \tilde z-\bar{\phi}_\Lambda+C \|V \circ \tilde z -\tilde z' \| \big] \, ds + C d_U(\tilde q,q) + \epsilon.
\]
Letting $\epsilon \to 0$ and minimizing over $\tilde z$ one obtains
\[
\mathcal{A}_{\phi,C}^t(p,q) \leq  \mathcal{A}_{\phi,C}^t(p,\tilde q) + C d_U(q,\tilde q).
\]
We conclude the proof of item \ref{Item:ActionAprioriEstimate_2} by permuting $q$ and $\tilde q$. 

\medskip
{\it Item \ref{Item:ActionAprioriEstimate_3}.}  The proof is similar. We set $\tau>0$ and $\epsilon>0$ small.  We choose a path $\tilde z : [0,T]$ joining $\tilde p$ and $q$ such that
\[
\int_0^t  \big[\phi \circ \tilde z-\bar{\phi}_\Lambda+C \|V \circ \tilde z -\tilde z' \| \big] \, ds \leq \mathcal{A}_{\phi,C}(\tilde p,q) + \epsilon.
\] 
We choose a path $w_\tau : [0,\tau] \to U$ that joins $p$ and $\tilde z(\tau)$ such that
\[
\int_0^\tau \| w_\tau'(s) \| \, ds \leq  d_U(\tilde z(\tau),p) +\epsilon.
\]
Define $z:[0,t] \to U$ as the concatenation of $w_\tau$ and the restriction $\tilde z : [\tau,t] \to U$. Then
\begin{align*}
\mathcal{A}_{\phi,C}^t(p,q) &\leq \int_0^t  \big[\phi \circ z-\bar{\phi}_\Lambda+C \|V \circ z -z' \| \big] \, ds \\
&\leq \mathcal{A}_{\phi,C}(\tilde p,q) + 2\tau \big( \Lip(\phi)\diam(U) +C \|V\|_\infty \big) \\
&\quad + \int_0^\tau C \big[ \| w_\tau'(s) \|  + \| \tilde z'(s) \| \big] \, ds + \epsilon.
\end{align*}
We conclude as before by letting first $\tau\to0$ and then $\epsilon \to 0$.
\end{proof}

We show in the second lemma basic properties of an action-like functional.

\begin{lemma} \label{Lemma:ContinuousLaxOleinikProperties}
Let $T$ be the Lax-Oleinik semigroup on $(U,\Lambda)$ of generator $\phi$ as defined in Definition~\ref{Definition:ContinuousLaxOleinikSemigroup}. Then for every $s,t >0$, every bounded function $ u,v : U \to \mathbb{R}$,
\begin{enumerate}
\item \label{Item:ContinuousLaxOleinikProperties_1} $T^t \circ T^s[u] = T^{t+s}[u]$,
\item \label{Item:ContinuousLaxOleinikProperties_2} $ u \leq v \ \Rightarrow \ T^t[u] \leq  T^t[v]$,
\item \label{Item:ContinuousLaxOleinikProperties_3} $\forall c \in \mathbb{R}, \ T^t[u+c] = T^t[u]+c$,
\item \label{Item:ContinuousLaxOleinikProperties_4} for every uniformly bounded family $(u_t)_{t>0}$, 
\[
\inf_{t>0} T^s[u_t](q) = T^s[\inf_{t>0}u_t](q).
\]
\item \label{Item:ContinuousLaxOleinikProperties_5} $\sup_{q \in U} | T^t[u](q) - T^t[v](q) | \leq  \sup_{q \in U} |u(q) - v(q) |$.
\item \label{Item:ContinuousLaxOleinikProperties_6} $T^t[u]$ is $C$-Lipschitz.
\end{enumerate}
\end{lemma}

\begin{proof}
{\it Part 1.} We prove item \ref{Item:ContinuousLaxOleinikProperties_1}. We rewrite the Lax-Oleinik operator
\[
T^t[u](q) = \inf_{p \in U} \big\{ u(p) + \mathcal{A}_{\phi,C}^t(p,q) \big\}.
\] 
The penalized action between two  points satisfies the inf-convolution property
\[
\mathcal{A}_{\phi,C}^{s+t}(p,q) = \inf_{r \in U} \big\{ \mathcal{A}_{\phi,C}^{s}(p,r) + \mathcal{A}_{\phi,C}^{t}(r,q) \big\}.
\]
Then by permuting the two infimum
\begin{align*}
T^{s+t}[u](q) &= \inf_{p \in U} \inf_{r \in U} \big\{ u(p) +  \mathcal{A}_{\phi,C}^{s}(p,r) + \mathcal{A}_{\phi,C}^{t}(r,q) \big\} \\
&= \inf_{r \in U} \big\{ T^s[u](r) + \mathcal{A}_{\phi,C}^{t}(r,q) \big\} = T^t \circ T^s[u](q).
\end{align*}
{\it Part 2.} Items \ref{Item:ContinuousLaxOleinikProperties_2}-- \ref{Item:ContinuousLaxOleinikProperties_6} are easily proved.
\end{proof}

\begin{proof}[\bf Proof of Theorem \ref{Theorem:ContinuousWeakKAMsolution}]
{\it Step 1.} Let $u : U \to \mathbb{R}$ be a bounded function and $t>0$. We show that $T^t[u]$ is $C$-Lipschitz uniformly in time $t$. Indeed for every $p,q,\tilde q$
\[
u(p)+\mathcal{A}_{\phi,C}^t(p,q) \leq u(p)+\mathcal{A}_{\phi,C}^t(p,\tilde q) + C d_U(q,\tilde q).
\]
Minimizing over $p$  and permuting $q$ and $\tilde q$ one obtains
\[
\big| T^t[u](q)-T^t[u](\tilde q) \big| \leq C d_U(q,\tilde q).
\]
{\it Step 2.} Let $v = \inf_{t>0}T^t[0]$. The function $v$ is well defined because of the positive Liv\v{s}ic criterion. We show that $T^s[v] \geq v$ for every $s>0$. Indeed using item \ref{Item:ContinuousLaxOleinikProperties_4} of Lemma \ref{Lemma:ContinuousLaxOleinikProperties}
\[
T^s[v] = T^s[\inf_{t>0}T^t[u]] = \inf_{t>0} T^s[T^t[u]] = \inf_{t>0} T^{s+t}[u] = \inf_{t> s} T^t[u] \geq v.
\]
In particular $s \in (0,+\infty) \mapsto T^s[u](q) \in\mathbb{R}$ is non decreasing.

\medskip
{\it Step 3.} Let $u = \sup_{s>0}T^s[v] = \lim_{n\to+\infty}T^n[v]$. We show that $u$ is a weak KAM solution, that is,
\[
\forall\, q \in U, \ \forall\, t>0, \ T^t[u](q)=u(q).
\]
Let $t>0$ fixed. On the one hand, $u \geq T^n[v]$ for every $n\geq1$, then $T^t[u] \geq T^{t+n}[v]$ for every $n\geq1$. Letting $n\to+\infty$ and using the continuity of $T^t$ for the uniform topology, one obtains $T^t[u] \geq u$. On the other hand, let $q \in U$ fixed, and $(\epsilon_n)_{n\geq1}$ be a sequence of positive real numbers tending to 0, then there exists $p_n \in U$ such that
\[
T^n[v](p_n)+ \mathcal{A}_{\phi,C}^t(p_n,q) \leq T^{t+n}[v](q) + \epsilon_n  \leq u(q) + \epsilon_n.
\]
Let $p \in \bar U$ be an accumulation point of $(p_n)_{n\geq1}$ and $\tilde p \in U$ supposed to be closed to $p$. As $T^n[v]$ and $\mathcal{A}_{\phi,C}^t(\cdot,q)$ are $C$-Lipschitz, one obtains
\[
T^n[v](\tilde p) + \mathcal{A}_{\phi,C}^t(\tilde p,q) \leq 2C d_U(\tilde p,p_n) + u(q)+\epsilon_n.
\]
Letting $n\to+\infty$ one obtains, $u(\tilde p) = \lim_{n \to+\infty} T^n[v](\tilde p)$ and
\[
T^t[u](q) \leq u(\tilde p) + \mathcal{A}_{\phi,C}^t(\tilde p,q) \leq 2C d_U(\tilde p,p) + u(q).
\]
Minimizing over $\tilde p$ we have $T^t[u](q) \leq u(q)$ and therefore $T^t[u]=u$.
\end{proof}

We conclude the proof of our main result Theorem \ref{Theorem:ContinuousSubactionExistence}. We use a notion of regularizing operator that has been suggested to us by Bony \cite{Bony2022}. Theorem \ref{Theorem:ContinuousWeakKAMsolution} shows the existence of a Lipschitz continuous integrated subaction $u_0:  \Omega \to \mathbb{R}$ (item \ref{Label:ContinuousErgodicOptimization_2} of Definition \ref{Definition:ContinuousErgodicOptimization}) defined on an open set $\Omega \supseteq \Lambda$. We cover $\Lambda$ by a finite set of open sets $(D_i)_{i=1}^N$. Let $U := \bigcup_{i=1}^N D_i$. We then construct a family of nonlinear operators $(R_i)_{i=1}^N$ that are regularizing on each $D_i$ in the following sense:
\begin{itemize}
\item if $u$ is a Lipschitz continuous integrated subaction on $\Omega$, then $v:=R_i[u]$ is also a Lipschitz continuous integrated subaction on $\Omega$,
\item if $u$ is locally a subaction at $x \in \Omega$, then $v$ is locally a subaction at $x$,
\item $v$ is  locally a subaction at every point of $D_i$.
\end{itemize}
We say that $u$ is {\it locally a subaction} at $x \in U$ if there exists a neighborhood $W$ of $x$ such that 
\[
\begin{cases}
\text{$u$ is differentiable along the flow in $W$}, \\
\text{$\mathcal{L}_V[u]$ is Lipschitz continuous in $W$}, \\
\forall\, y \in W, \ \mathcal{L}_V[u] (y) \leq \phi(y) - \bar \phi_\Lambda.
\end{cases}
\]
Then $v=R_N \circ \cdots \circ R_1[u_0]$ is a subaction on $U=\bigcup_{i=1}^kD_i \subset \Omega$.

\begin{proof}[\bf Proof of Theorem \ref{Theorem:ContinuousSubactionExistence}]
We consider a family of local flow boxes $(D''_i)_{i=1}^N$ indexed by a finite subset of points $\{x_1, \ldots,x_N\} \subset \Lambda$
\[
D''_i = \gamma_{x_i}(\tilde D''), \quad \tilde D'' = (-2\epsilon,\tau+2\epsilon) \times B_{x_i}(3\epsilon).
\] 
(See item \ref{Item:AdaptedLocalFlowBoxes_2} of  Definition \ref{Definition:AdaptedLocalFlowBoxes} for the definition of a family of local flow boxes.)  We also note
\begin{gather*}
D'_i= \gamma_{x_i}(\tilde D_i'), \quad \tilde D_i' = (-\epsilon,\tau+\epsilon) \times B_{x_i}(2\epsilon), \\
D_i = \gamma_{x_i}(\tilde D), \quad \tilde D_i = (0,\tau) \times B_{x_i}(\epsilon), \\
D_i \subset D'_i \subset D''_i,
\end{gather*}
choose $\epsilon>0$ small enough and the number of points $N$ large enough so that 
\[
\Lambda \subset \bigcup_{i=1}^N  D_i\ \ \text{and} \ \ \bigcup_{i=1}^N D''_i \subset \Omega.
\] 
We construct a regularizing  operator $u \mapsto R_{i}[u]$ for every $i\in\llbracket 1,N\rrbracket$, possessing the following properties: if $u$ is a Lipschitz continuous integrated subaction on $\Omega$ and $v = R_{i}[u]$ then
\begin{itemize}
\item $v$ is a Lipschitz continuous integrated subaction on $\Omega$,
\item $v$ is locally a subaction at every $x \in D_i$,
\item if $u$ is locally a subaction at $x \in\Omega$ then $v$ is locally a subaction at $x$,
\item $u$ and $v$ coincide on a neighborhood of $\Omega \setminus D''_i$.
\end{itemize}
Let $\alpha_i : B_{x_i}(3\epsilon) \to [0,1]$ be a smooth function satisfying $\Supp(\alpha_i) \subset B_{x_i}(2\epsilon)$ and $\alpha_i=1$ on $B_{x_i}(\epsilon)$. Let $\beta_i : (-2\epsilon, \tau+2\epsilon) \to [0,1]$ be a smooth function satisfying $\Supp(\beta_i) \subset (-\epsilon, \tau+\epsilon)$ and $\beta_i=1$ on $(0,\tau)$. We define $v=R_i[u]$ in the following way
\[
\begin{cases}
v(z) = u(z), & \forall\, z \in \Omega \setminus \overline{D_i'}, \\
v(z) = \tilde v_i \circ \gamma_{x_i}^{-1}, & \forall\, z \in D''_i,
\end{cases}
\]
for some $\tilde v_i : \tilde D_i'' \to \mathbb{R}$  that  coincides with $\tilde u_i := u \circ \gamma_{x_i}^{-1}$ on $\tilde D_i'' \setminus \overline{\tilde D_i'}$. 

We now construct $\tilde v_i$. For every $q\in  B_{x_i}(3\epsilon)$,  $t \mapsto \tilde u_i (t,q)$ is Lipschitz continuous, $\partial_t \tilde u_i (t,q)$ exists for almost every $t$ and in the distribution sense. Moreover the two notions of differentiability, Lebesgue almost everywhere and in the distribution sense, coincide.  We may moreover assume that $\partial_t \tilde u_i$ is Borel with respect to $(t,q) \in \tilde D_i''$.  Let $\tilde \phi_i := \phi \circ \gamma_{x_i} - \bar\phi_\Lambda$. Define for every $(t,q) \in \tilde D''_i$
\begin{gather*}
\tilde v_i (t,q) = (1-\alpha_i(q)) \tilde u_i (t,q) + \alpha_i(q) \tilde w_i(t,q), 
\end{gather*}
where 
\begin{multline}
\tilde w_i(t,q) = \tilde u_i(t,q) + \int_{-\epsilon}^t \beta_i(s) \big( \tilde\phi_i(s,q)   -  \partial_t \tilde u_i(s,q) \big)   \, ds \\
- \frac{\int_{-\epsilon}^t \beta_i(s) \, ds}{\int_{-\epsilon}^{\tau+\epsilon}\beta_i(s) \, ds}  \int_{-\epsilon}^{\tau+\epsilon} \beta_i(s) \big( \tilde \phi_i(s,q)  - \partial_t \tilde u_i(s,q) \big)  \, ds. \label{Equation:ProofMainTheorem_1}
\end{multline}
The first observation is the fact that $\tilde w_i$ is differentiable along the flow on $\tilde D_i$ and that $\partial_t \tilde w_i$ is Lipschitz continuous. Indeed integrating by part, as $\beta_i(t) = 1$ for $t \in (0,\tau)$, one obtains
\begin{align*}
\tilde u_i(t,q) &- \int_{-\epsilon}^t \beta_i(s) \partial_t \tilde u_i (s,q)   \, ds  \\
&= ( 1 -  \beta_i (t)) \tilde u_i (t,q) + \int_{-\epsilon}^t  \beta_i'(s) \tilde u_i (s,q)  \, ds  = \int_{-\epsilon}^t  \beta_i'(s) \tilde u_i (s,q)  \, ds, \\
\partial_t \tilde w_i(t,q) &= \tilde \phi_i (t,q) - \frac{\int_{-\epsilon}^{\tau+\epsilon} \beta_i (s) \big( \tilde \phi_i (s,q)  - \partial_t u_i(s,q) \big) \, ds}{\int_{-\epsilon}^{\tau+\epsilon}\beta_i(s) \, ds} \\
&= \tilde \phi_i (t,q) - \frac{\int_{-\epsilon}^{\tau+\epsilon} \big[ \beta_i (s) \tilde \phi_i (s,q)  +\beta_i'(s) u_i(s,q) \big] \, ds}{\int_{-\epsilon}^{\tau+\epsilon}\beta_i(s) \, ds}.
\end{align*}
The second observation is the fact that $\tilde w_i$ is an integrated subaction on $\tilde D''_i$ (and in particular a subaction on $\tilde D_i$). Indeed as $u$ is an integrated subaction, one observes for every $(t,q) \in \tilde D''_i$
\begin{gather}
\tilde \phi_i(t,q)  - \partial_t \tilde  u_i(t,q) \geq  0. \label{Equation:ProofMainTheorem_2}
\end{gather}
Then for every $t_1 < t_2$ in $(-2\epsilon, \tau+2\epsilon)$, for every  $q \in B_{x_i}(3\epsilon)$, as $\beta_i$ takes values in $[0,1]$, using \eqref{Equation:ProofMainTheorem_1} and \eqref{Equation:ProofMainTheorem_2}, we have
\begin{align*}
\int_{t_1}^{t_2} \beta_i(s)& \big( \tilde\phi_i(s,q)  -  \partial_t \tilde u_i(s,q) \big)  \, ds  \\
&\leq \int_{t_1}^{t_2} \big( \tilde\phi_i(s,q)  -  \partial_t \tilde u_i(s,q) \big)  \, ds  \\
&= \int_{t_1}^{t_2} \tilde\phi(s,q)  \, ds    - \big( \tilde u_i(t_2,q) - \tilde u_i(t_1,q) \big), \\
\tilde w_i(t_2,q) &- \tilde w_i(t_1,q) \\
&\leq \int_{t_1}^{t_2} \tilde\phi_ i(s,q) \, ds 
 - \frac{\int_{t_1}^{t_2} \beta_i(s) \, ds}{\int_{-\epsilon}^{\tau+\epsilon}\beta_i(s) \, ds}  \int_{-\epsilon}^{\tau+\epsilon} \beta_i(s) \big( \tilde \phi_i (s,q)  - \partial_t \tilde u_i(s,q) \big)  \, ds \\
&\leq \int_{t_1}^{t_2} \tilde \phi_i (s,q) \, ds.
\end{align*}
By construction $\tilde v_i$ coincides with $\tilde u_i$ on $\tilde D''_i \setminus \overline{\tilde D'_i}$. Moreover if $\tilde u_i$ is locally a subaction at $(t,q) \in \tilde D''_i$, then $\tilde v_i$ is locally a subaction at $(t,q)$.

Lastly if $(t,q) \in \tilde D_i''$ and $\tilde u $ is locally a subaction at $(t,q)$, then by construction of $\tilde w$, $\tilde v$ is also locally a subaction at $(t,q)$.

We now conclude the proof of Theorem \ref{Theorem:ContinuousSubactionExistence}.  Let $u_0$ be a Lipschitz continuous integrated subaction given by Theorem \ref{Theorem:ContinuousWeakKAMsolution}. By induction 
\[
u_k := R_{{k}} \circ \cdots \circ  R_{{1}}[u_0]
\]
is a Lipschitz continuous integrated subaction that is locally a subaction at every point of $\bigcup_{i=1}^k D_{i}$. The function $u_N : \bigcup_{i=1}^N D_{i} \to \mathbb{R}$ is the desired Lipschitz continuous subaction.
\end{proof}

\begin{proof}[\bf Proof of Corollary \ref{Corollary:SimultaneousBounds}] Theorem \ref{Theorem:ContinuousSubactionExistence} implies there exists  a Lipschitz continuous subaction $u_1$ such that
\begin{gather*}
\phi_1 := \phi - \mathcal{L}_V[u_1] \geq  \bar\phi_\Lambda.
\end{gather*}
As $\phi$ and $\phi_1$ have both the same ergodic maximizing value, Theorem \ref{Theorem:ContinuousSubactionExistence} again shows that there exists a Lipschitz continuous subaction $u_2$ such that
\begin{gather*}
\phi_1 - \mathcal{L}_V[u_2] \leq  \bar{\bar\phi}_\Lambda.
\end{gather*}
We may assume that $u_2\leq0$. Let $C=\Osc(u_2)$ be the oscillation of $u_2$ and $T= 4C/(\bar{\bar\phi}_\Lambda - \bar\phi_\Lambda)$. If $u_2$ is constant the proof is finished. Assume $C>0$. Define the following two nonempty compact sets
\begin{gather*}
{\bar K} := \Big\{ x \in \Omega : \int_0^T \phi_1 \circ f^s(x) \, ds  \leq C + T \bar\phi_\Lambda \Big\}, \\
{\bar{\bar K}} := \Big\{ x \in \Omega : \int_0^T \big(\phi_1 - \mathcal{L}_V[u_2] \big) \circ f^s(x) \, ds \geq T\bar{\bar\phi}_\Lambda -C \Big\}.
\end{gather*}
Indeed, we prove that $\bar K \neq \emptyset$ and the fact $\bar{\bar K} \neq \emptyset$ can be obtained similarly. By  contradiction, we have for every $x\in \Omega$,
\[
\int_0^T \phi_1\circ f^s(x)ds \geq T \left(\bar{\phi}_\Lambda + \frac{C}{T} \right). 
\]
For any integer $N\geq 1$, by concatenating $x, f^T(x), f^{2T}(x), \cdots, f^{NT}(x)$, one obtains
\[
\forall~x\in \Lambda, \forall~N\geq 1, \quad \int_0^{NT}  \phi_1\circ f^s(x)ds \geq NT \left(\bar{\phi}_\Lambda + \frac{C}{T} \right). 
\]
We then get the contradiction
\[
\bar{\phi}_\Lambda = \lim_{N\rightarrow +\infty} \inf_{x\in \Lambda} \frac{1}{NT} \int_0^{NT}  \phi_1\circ f^s(x)ds \geq \bar{\phi}_\Lambda + \frac{C}{T}.
\]

Notice first that ${\bar K} \cap {\bar{\bar K}} = \emptyset$. Indeed the existence of a point $x \in {\bar K} \cap {\bar{\bar K}}$ would imply the following absurd inequality
\begin{multline*}
C \geq u_2(x) - u_2 \circ f^T(x) = \int_0^T -\mathcal{L}_V[u_2] \circ f^s(x) \, ds \\
\geq  T( \bar{\bar\phi}_\Lambda - \bar \phi_\Lambda) - 2C = 2C.
\end{multline*}
We choose a smooth function $\theta : \Omega \to [0,1]$ such that
\begin{gather*}
\forall\, x \in {\bar K}, \ \theta(x) =0, \quad  \forall\, x \in {\bar{\bar K}}, \ \theta(x) = 1.
\end{gather*}
Let 
\begin{gather*}
\phi_2  := \phi_1 - \mathcal{L}[\theta u_2].
\end{gather*}
We show that 
\begin{gather*}
\forall\, x \in \Omega, \ \int_0^T \phi_2 \circ f^s(x) \, ds \geq T\bar\phi_\Lambda.
\end{gather*} 
Either $x \in {\bar K}$ then $(\theta u_2)(x)=0$, $(\theta u_2) \circ f^T(x) \leq 0$, 
\begin{align*}
\int_0^T \phi_2 \circ f^s(x) \, d &=   \int_0^T \phi_1 \circ f^s(x) \, ds + (\theta u_2)(x) - (\theta u_2)\circ f^T(x) \\
&\geq \int_0^T \phi_1 \circ f^s(x) \, ds  \geq T \bar\phi_\Lambda.
\end{align*}
Or $x\not\in {\bar K}$, then
\begin{gather*}
\int_0^T \phi_2 \circ f^s(x) \, ds  \geq \int_0^T \phi_1 \circ f^s(x)  \, ds - C   > T \bar\phi_\Lambda.
\end{gather*}
We show that 
\begin{gather*}
\forall\, x \in \Omega, \ \int_0^T \phi_2 \circ f^s(x) \, ds \leq T \bar{\bar\phi}_\Lambda.
\end{gather*}
Either $x \in {\bar{\bar K}}$, then $((1-\theta)u_2) \circ f^T(x) \leq 0$, $((1-\theta)u_2)(x) = 0$,
\begin{align*}
\int_0^T &\phi_2 \circ f^s(x) \, ds \\
&= \int_0^T \big( \phi_1 -\mathcal{L}_V[u_2] \big) \circ f^s(x) \, ds + \int_0^T ((1-\theta)u_2) \circ f^s(x) \, ds \\
&= \int_0^T \big( \phi_1 -\mathcal{L}_V[u_2] \big) \circ f^s(x) \, ds + ((1-\theta)u_2) \circ f^T(x) - ((1-\theta)u_2)(x) \\
&\leq  T \bar{\bar \phi}_\Lambda.
\end{align*}
Or $x \not\in {\bar{\bar K}}$, then
\begin{align*}
\int_0^T \phi_2 \circ f^s(x) \, ds &\leq \int_0^T \big( \phi_1 -\mathcal{L}_V[u_2] \big) \circ f^s(x) \, ds +  \Osc((1-\theta)u_2)) \\
&\leq T \bar{\bar \phi}_\Lambda -C +  \Osc((1-\theta)u_2)) \leq T \bar{\bar \phi}_\Lambda.
\end{align*}
Let 
\begin{gather*}
u_3(x) = -\frac{1}{T} \int_0^T \Big[ \int_0^t \phi_2 \circ f^s(x) \, ds \Big] dt.
\end{gather*}
Then for every $x\in\Omega$
\begin{gather*}
\mathcal{L}_V[u_3](x) = \frac{1}{T} \int_0^T \big(\phi_2(x) -  \phi_2 \circ f^t(x) \big) \, dt =  \phi_2(x) - \frac{1}{T} \int_0^T \phi_2 \circ f^s(x) \, ds , \\
\phi_2(x) - \mathcal{L}_V[u_3] =\frac{1}{T} \int_0^T \phi_2 \circ f^s(x) \, ds \in [\bar\phi_\Lambda, \bar{\bar\phi}_\Lambda], \\
\bar\phi_\Lambda \leq \phi - \mathcal{L}_V[u_1+u_2+u_3] \leq \bar{\bar\phi}_\Lambda.\qedhere
\end{gather*}
\end{proof}

\vfill
\pagebreak
\appendix
\appendixpageoff
\addappheadtotoc

%%%%%%%%%%%%%%%%
\section{Local hyperbolic flows}
%%%%%%%%%%%%%%%%
\label{Appendix:LocalHyperbolicFlows}

We review in this section the local theory of hyperbolic flows. Good monographs on the subject can be found in  Hasselblatt, Katok  \cite{HasselblattKatok1995}, Bonatti, Diaz, Viana \cite{BonattiDiazViana2005}, and Fisher, Hasselblatt \cite{FisherHasselblatt2019}.

The notion of flow boxes is standard. We need nevertheless to describe precisely  the Poincar\'e sections and the constants of hyperbolicity of  the corresponding Poincar\'e maps. As the Poincar\'e maps are also uniformly hyperbolic, we recall the notion of adapted local hyperbolic maps as written in Appendix A, Definition A.1 in \cite{SuThieullenYu2021}.

%%%%%%%%%%%%%%%%%
\subsection{Adapted local flow boxes}
%%%%%%%%%%%%%%%%%

We consider in the following definition a local Lipschitz map $f: B(\rho) \to \mathbb{R}^d$ where $B(\rho)$ is a ball of $\mathbb{R}^d$ for some norm $| \cdot |$ and the target space is  $\mathbb{R}^d$ equipped with a possibly different norm $\| \cdot \|$. We assume that the source and target spaces are direct sums of an unstable space and a stable space
\[
\mathbb{R}^d = E^u \oplus E^s, \quad \mathbb{R}^d= \tilde E^u \oplus \tilde E^s.
\]
We denote by $P^u : \mathbb{R}^d \to E^u$, $P^s : \mathbb{R}^d \to E^s$, and $\tilde P^u : \mathbb{R}^d \to \tilde E^u$ and $\tilde P^s : \mathbb{R} \to \tilde E^s$, the corresponding projections. We assume that the two norms are adapted to the splittings in the sense
\[
\left\{ \begin{array}{l}
\forall v,w \in E^u \times E^s, \ |v+w| = \max(|v|,|w|), \\
\forall v,w \in \tilde E^u \times \tilde E^s, \ \|v+w\| = \max(\|v\|, \|w\|).
\end{array}\right.
\]
The balls are denoted by $B(\rho)$, $B^u(\rho)$, $B^s(\rho)$ in the source space, and by $\tilde B(\rho)$, $\tilde B^u(\rho)$,  $\tilde B^s(\rho)$ in the target space. In particular $B(\rho) = B^u(\rho) \times B^s(\rho)$, $\tilde{B}(\rho) = \tilde{B}^u(\rho) \times \tilde{B}^s(\rho)$.

\begin{definition}[Adapted local hyperbolic map] \label{Definition:AdaptedLocalHyperbolicMap}
Let $(\sigma^s,\sigma^u,\eta,\rho)$ be positive real numbers called {\it constants of hyperbolicity} satisfying
\[
\sigma^u > 1 > \sigma^s, \quad \eta < \min\Big( \frac{\sigma^u-1}{6},\frac{1-\sigma^s}{6} \Big), \quad \epsilon(\rho) := \rho \min\Big( \frac{\sigma^u-1}{2},\frac{1-\sigma^s}{8}\Big).
\]
An {\it adapted local hyperbolic map with respect to the two norms and the constants of hyperbolicity } is a set of data $(f,A, E^{u/s}, \tilde E^{u/s}, |\cdot|,\| \cdot\|)$ such that:
\begin{enumerate}
\item $f : B(\rho) \to \mathbb{R}^d$ is a Lipschitz map,
\item  $A:\mathbb{R}^d \to \mathbb{R}^d$ is a linear map which may not be invertible and is defined into block matrices
\[
A = \begin{bmatrix}
A^u & D^u \\ D^s &  A^s
\end{bmatrix}, \quad 
\left\{\begin{array}{l}
(v,w) \in E^u \times E^s, \\
A(v+w) = \tilde v + \tilde w,
\end{array}\right.
\ \Rightarrow \
\left\{\begin{array}{l}
\tilde v = A^u v + D^u w \in \tilde E^u, \\ \tilde w = D^s v  + A^s w \in \tilde E^s,
\end{array}\right.
\]
that satisfies
\[
\left\{\begin{array}{l}
\forall \, v \in E^u, \ \|A^u v\| \geq \sigma^u \|v\|, \\
\forall \, w \in E^s, \ \|A^s w\| \leq \sigma^s \|w\|,
\end{array}\right. \quad\text{and}\quad
\left\{\begin{array}{ll}
\|D^u\| \leq \eta, & \Lip(f-A) \leq \eta, \\ \|D^s\| \leq \eta, & \|f(0)\| \leq \epsilon(\rho),
\end{array}\right.
\]
where  the $\Lip$ constant is computed using the two norms $| \cdot |$ and $\| \cdot \|$.
\end{enumerate}
\end{definition}

We consider now a $C^1$ flow $(M,V,f)$  and an $f$-invariant compact hyperbolic set as defined in Definition \ref{Definition:LocallyHyperbolicFlow}.

\begin{definition}[Adapted local flow boxes]
\label{Definition:AdaptedLocalFlowBoxes}
A family of adapted local flow boxes is the set of data $\Gamma=(\Gamma,\Sigma, E, N,F,A)$ and the set of constants $(\sigma^u,\sigma^s,\eta,\rho,\tau)$ satisfying the following properties.
\begin{enumerate}
\item \label{Item:AdaptedLocalFlowBoxes_1} The constants $(\sigma^u,\sigma^s,\eta,\rho, \tau)$ are chosen so that $\tau>0$ and
\begin{gather*}
\exp \Big(\frac{\tau \lambda^s}{2} \Big) < \sigma^s < 1 < \sigma^u < \exp \Big(\frac{\tau\lambda^u}{2} \Big) \\ 
\eta < \min \Big( \frac{\sigma^u-1}{6}, \frac{1-\sigma^s}{6} \Big), \quad  \epsilon(\rho) := \rho \min\Big( \frac{\sigma^u-1}{2},\frac{1-\sigma^s}{8}\Big).
\end{gather*}

\item \label{Item:AdaptedLocalFlowBoxes_2}  $\Gamma := (\gamma_x)_{x\in\Lambda}$ is a parametrized family of $C^1$  diffeomorphisms such that
\[
\gamma_x : \tilde D := (-\tau,2\tau) \times B(1) \twoheadrightarrow D_x \subseteq M, \quad  B(1) \subset \mathbb{R}^d,
\] 
are onto   $D_x := \gamma_x( \tilde D_x)$ that is assumed to be open, where $B(1)$ is the euclidean  unit ball. Moreover for every $x \in\Lambda$
\begin{enumerate}
\item  $\gamma_x(0,0)=x$, 
\item the $C^1$ norm of $\gamma_x$ and $\gamma_x^{-1}$ is uniformly bounded with respect to $x\in\Lambda$, that is denoted by $\Lip(\Gamma)$ and chosen so that  $\Lip(\Gamma)\geq1$; the {\it local coordinates} of a point $q \in D_x$ are denoted by
\[
\gamma_{x}^{-1}(q) = (\tau_x(q),\pi_x(q)), \quad 
\begin{cases}
\tau_x : D_x \to (-\tau,2\tau), \\
\pi_x : D_x \to B(1),
\end{cases}
\]
\item the flow $f$ is locally conjugated to the constant horizontal flow generated by the vector field $e_1=(1,0)$,
\[
\left\{ \begin{array}{l}
\forall\, (s,u) \in (-\tau,2\tau) \times B(1), \ T_{s,u}\gamma_x  (1,0) = V \circ \gamma_x(s,u), \ \text{(equivalently)} \\
\forall\, (s,u) \in (-\tau,2\tau) \times B(1), \ f^s \circ \gamma_x(0,u) = \gamma_x(s,u).
\end{array}\right.
\]
\end{enumerate}
The pair $(D_x, \gamma_x^{-1})$ is called a {\it local flow box}.

\item \label{Item:AdaptedLocalFlowBoxes_3} $\Sigma := (\Sigma_x)_{x \in\Lambda}$, where $\Sigma_x:=\gamma_x(\{0\} \times B(1))$, is the family of  {\it local Poincar\'e sections at $x$}. Let  $\gamma_{x,0} : B(1) \to \Sigma_x$ and $\gamma_{x,0}^{-1} : \Sigma_x \to  B(1)$ be the restriction of $\gamma_x$ to $\{0\} \times B(1)$ and its  inverse to $\Sigma_x$.

\item \label{Item:AdaptedLocalFlowBoxes_3_bis} $E = (E_x^{u/s})_{x \in\Lambda}$ is a parametrized family of splitting $\mathbb{R}^d = E_x^u \oplus E_x^s$  obtained by  pulling backward  the corresponding splitting $T_xM =E_\Lambda^u(x) \oplus E_{\Lambda}^0(x) \oplus E_\Lambda^s(x)$ by the tangent map $T_0\gamma_{x}$ at the origin of $\mathbb{R}^d$,
\[
T_{0}\gamma_x(\{0\} \times E_x^{u/s}) = E_\Lambda^{u/s}(x).
\] 
We denote by $P_x^u$, $P_x^s$  the corresponding projections.

\item \label{Item:AdaptedLocalFlowBoxes_4} $N:=(\| \cdot \|_x)_{x \in\Lambda}$ is a family of $C^0$ norms on $\mathbb{R}^d$ adapted to the splitting $\mathbb{R}^d = E_x^u \oplus E_x^s$. $B_x(\rho)$ denotes the  ball centered at the origin and of radius $\rho$ with respect to the norm $\| \cdot \|_x$. We assume $\rho$ small enough so that 
\[
B_x(\rho) \subset B(1).
\]
The norms $\| \cdot \|_x$ are called {\it local norms}.

\item \label{Item:AdaptedLocalFlowBoxes_5} We assume that the two constants $\rho < \frac{1}{3}\tau$ are chosen small enough so that, for every $x,y \in \Lambda$ satisfying 
\[
y \in \gamma_x((\tau-\rho,\tau+\rho) \times B_x(\rho)),
\] 
there exists a $C^1$ function ${\tilde\tau_{x,y}} : B_x(\rho) \to (0,2\tau)$ such that
\begin{gather*}
\left\{\begin{array}{l}
\forall\, q \in B_x(\rho), \ \gamma_x((-\tau,2\tau) \times \{q\}) \cap \Sigma_y \ \ \text{is a singleton $z_{x,y}(q)$},\\
\forall\, q \in B_x(\rho), \  z_{x,y}(q)=\gamma_x({\tilde\tau_{x,y}}(q),q) = f^{{\tilde\tau_{x,y}}(q)}( \gamma_x(0,q))  \in \Sigma_y, \\
\| \nabla {\tilde\tau_{x,y}}\|_x \leq 1.
\end{array}\right.
\end{gather*}
The function ${\tilde\tau_{x,y}}$ is  called {\it local return time}. We use also the notation
\[
\tau_{x,y} = \tilde\tau_{x,y} \circ \gamma_{x,0}^{-1} : \gamma_{x,0}(B_x(\rho)) \subset \Sigma_x \to \mathbb{R}.
\]

\item \label{Item:AdaptedLocalFlowBoxes_6}  $F:=(f_{x,y})_{x,y \in\Lambda}$ is the  family of maps $f_{x,y} : B_x(\rho) \to \gamma_{y,0}^{-1}(\Sigma_y) = B(1)$,  parametrized by the couples $(x,y)$ satisfying
\[
y \in \gamma_x((\tau-\rho,\tau+\rho) \times B_x(\rho)),
\] 
 and defined by
\[
\forall\, v\in B_x(\rho), \ f_{x,y}(v) = \gamma_{y,0}^{-1} \circ f^{{\tilde\tau_{x,y}}(v)} \circ \gamma_{x,0}(v) : B_x(\rho) \to B(1).
\]
In the condition on $(x,y)$, $y$ should be thought close to $\gamma_x(\{\tau\} \times B_x(\rho))$. 
The map $f_{x,y}$ is called  the {\it local Poincar\'e map} and corresponds to the Poincar\'e map $f^{{\tau_{x,y}}(\cdot)}(\cdot)$ between the two sections $\Sigma_x,\Sigma_y$.

\item \label{Item:AdaptedLocalFlowBoxes_7} $A := (A_{x,y})_{x,y \in\Lambda}$ is the  family of tangent maps of $f_{x,y}$ at the origin, where $x,y$ satisfy $y \in \gamma_x((\tau-\rho,\tau+\rho) \times B_x(\rho))$, 
\[
A_{x,y} := Df_{x,y}(0),
\]

\item \label{Item:AdaptedLocalFlowBoxes_7bis} A couple of points $(x,y) \in \Lambda \times \Lambda$ is said to be {\it $\Gamma$ forward admissible}, and we write  $x \overset{\Gamma}{\to} y$,  if
\[
y \in \gamma_x((\tau-\rho,\tau+\rho) \times B_x(\rho)) \ \text{and} \ \ f_{x,y}(0) \in B_y(\epsilon(\rho)).
\]

\item \label{Item:AdaptedLocalFlowBoxes_8} For every $\Gamma$ forward admissible $x \overset{\Gamma}{\to} y$ ,  the set of data 
\begin{gather*}
(f_{x,y},A_{x,y}, E_x^{u/s}, E_y^{u/s}, \| \cdot \|_x, \| \cdot \|_y)
\end{gather*} 
and constants $(\sigma^u,\sigma^s,\eta,\rho)$ is an adapted local hyperbolic map as in  Definition \ref{Definition:AdaptedLocalHyperbolicMap}.
\end{enumerate}
\end{definition}

In less precise terms, if $(x,y)$ is $\Gamma$ forward admissible, the Poincar\'e map on the manifold  $[z\in \gamma_x(B_x(\rho)) \subseteq \Sigma_x \mapsto f^{{\tilde\tau_{x,y}}(z)}(z) \in\Sigma_y]$ is well defined; the local Poincar\'e map $[v \in B_x(\rho) \mapsto f_{x,y}(v) \in B(1)]$ is  a local hyperbolic map as in Definition \ref{Definition:AdaptedLocalHyperbolicMap}.

%%%%%%%%%%%%%%
\section*{Acknowledgement}
%%%%%%%%%%%%%%

We would like to thank J-.F. Bony for helping us at the last part of the proof of Theorem \ref{Theorem:ContinuousSubactionExistence}.

\end{document}